\pgfplotsset{
compat = 1.13,
tick label style = {font = \scriptsize},
legend style = {font = \scriptsize}
}
\def\@seccntformat#1{%
  \protect\textup{\protect\@secnumfont
    \ifnum\pdfstrcmp{subsection}{#1}=0 \bfseries\fi
    \csname the#1\endcsname
    \protect\@secnumpunct
  }%
}
\newtheorem{theorem}{Theorem}[section]
\newtheorem{lemma}[theorem]{Lemma}
\newtheorem{algorithm}[theorem]{Algorithm}
\newtheorem{definition}[theorem]{Definition}
\newtheorem{remark}[theorem]{Remark}
\newcommand\e{\boldsymbol{e}}
\newcommand{\m}{\boldsymbol{m}}
\newcommand{\n}{{\boldsymbol{n}}}
\newcommand\q{\boldsymbol{q}}
\renewcommand\u{\boldsymbol{u}}
\renewcommand\v{\boldsymbol{v}}
\newcommand\w{\boldsymbol{w}}
\newcommand\x{\boldsymbol{x}}
\newcommand\z{\boldsymbol{z}}
\newcommand\llambda{\boldsymbol{\lambda}}
\newcommand\mmu{\boldsymbol{\mu}}
\renewcommand\L{\boldsymbol{L}}
\def\C{\boldsymbol{C}}
\def\H{\boldsymbol{H}}
\def\ppi{\boldsymbol{\pi}}
\def\vvarphi{\boldsymbol{\varphi}}
\def\0{\boldsymbol{0}}
\newcommand{\abs}[1]{\left\lvert #1 \right\rvert}
\newcommand{\dt}{\mathrm{d}t}
\newcommand{\dx}{\mathrm{d}\boldsymbol{x}}
\newcommand{\EE}{\mathcal{E}}
\newcommand\eps{\varepsilon}
\newcommand{\eeta}{\boldsymbol{\eta}}
\newcommand{\grad}{\nabla}
\newcommand{\Grad}{\boldsymbol{\nabla}}
\newcommand{\heff}{\boldsymbol{h}_{\textrm{eff}}}
\newcommand{\heffhimex}{\boldsymbol{h}_{\textrm{eff}, h}^{\textrm{imex}}}
\newcommand{\hstray}{\boldsymbol{h}_{\textrm{s}}}
\newcommand{\inner}[3][]{\langle #2,#3 \rangle_{#1}}
\newcommand{\interp}{\mathcal{I}_h}
\newcommand{\Interp}{\boldsymbol{\mathcal{I}}_h}
\newcommand{\Kh}{\boldsymbol{\mathcal{K}}_h}
\newcommand{\Lapl}{\boldsymbol{\Delta}}
\newcommand{\RLapl}{\widetilde{\boldsymbol{\Delta}}}
\newcommand{\PPast}{\boldsymbol{P}_{\!\!\ast}}
\newcommand{\II}{\boldsymbol{I}}
\newcommand{\lex}{\ell_{\mathrm{ex}}}
\newcommand{\Mh}{\boldsymbol{\mathcal{M}}_h}
\newcommand{\mt}{\partial_t \boldsymbol{m}}
\newcommand{\N}{\mathbb{N}}
\newcommand{\norm}[2][]{\left\lVert #2 \right\rVert_{#1}}
\newcommand{\bignorm}[2][]{\big\lVert #2 \big\rVert_{#1}}
\renewcommand{\P}{\mathbb{P}}
\newcommand{\R}{\mathbb{R}}
\newcommand{\sphere}{\mathbb{S}^2}
\newcommand{\TT}{\mathcal{T}}
\newcommand{\NN}{\mathcal{N}}
\renewcommand\SS{\mathcal{S}}
\def\a{\boldsymbol{a}}
\def\b{\boldsymbol{b}}
\def\c{\boldsymbol{c}}
\def\d{\boldsymbol{d}}
\def\f{\boldsymbol{f}}
\def\g{\boldsymbol{g}}
\newcommand{\Range}{\mathcal{R}}
\newcommand{\Kernel}{\mathcal{N}}
\newcommand{\meas}{\operatorname{meas}}
\title{Unconditional well-posedness and IMEX improvement of a family of predictor-corrector methods in micromagnetics}
\author{Norbert J.\ Mauser}
\author{Carl-Martin Pfeiler}
\author{Dirk Praetorius}
\author{Michele Ruggeri}
\address{Institute of Analysis and Scientific Computing,
TU Wien,
Wiedner Hauptstrasse 8--10,
1040 Vienna, Austria}
\email{carl-martin.pfeiler@asc.tuwien.ac.at\quad\rm(corresponding author)}
\email{dirk.praetorius@asc.tuwien.ac.at}
\email{michele.ruggeri@asc.tuwien.ac.at}
\address{Research Platform MMM ''Mathematics-Magnetism-Materials'' c/o Faculty of Mathematics,
University of Vienna,
Oskar-Morgenstern-Platz 1,
1090 Vienna, Austria}
\email{norbert.mauser@univie.ac.at}
\date{\today}
\keywords{Landau--Lifshitz--Gilbert equation,
micromagnetism,
finite elements,
implicit-explicit time-marching scheme,
unconditional well-posedness}
\subjclass[2010]{35K61, 65M12, 65M22, 65M60, 65Z05}
\thanks{{\bf Acknowledgement.}
The authors thankfully acknowledge support by the Austrian Science Fund (FWF) through
the doctoral school \emph{Dissipation and dispersion in nonlinear PDEs} (grant W1245),
the special research program \emph{Taming complexity in partial differential systems} (grant F65),
and the project \emph{Reduced order approaches for micromagnetics} (grant P31140);
by the Vienna Science and Technology Fund (WWTF) through
the project \emph{Schr\"odinger Equations for QUantum EXperiments (SEQUEX)} (grant MA16-066);
and by the University of Vienna research platform MMM (''Mathematics-Magnetism-Materials'').
Further, we thank Lukas Exl for fruitful discussions in the early stage of this work.
}
\begin{document}
\begin{abstract}
Recently,
\emph{Kim \& Wilkening (Convergence of a mass-lumped finite element method for the Landau--Lifshitz equation, Quart.\ Appl.\ Math., 76, 383--405, 2018)} proposed two novel predictor-corrector methods
for the Landau--Lifshitz--Gilbert equation (LLG) in micromagnetics,
which models the dynamics of the magnetization in ferromagnetic materials.
Both integrators are based on the so-called Landau--Lifshitz form of LLG,
use mass-lumped variational formulations discretized by first-order finite elements,
and only require the solution of linear systems, despite the nonlinearity of LLG.
The first(-order in time) method combines a linear update
with an explicit projection of an intermediate approximation onto the unit sphere
in order to fulfill the LLG-inherent unit-length constraint at the discrete level.
In the second(-order in time) integrator, the projection step
is replaced by a linear constraint-preserving variational formulation.
In this paper,
we extend the analysis of the integrators
by proving unconditional well-posedness
and by establishing
a close connection of the methods with other approaches available in the literature.
Moreover, the new analysis also provides a well-posed integrator for the Schr\"odinger map equation (which is the limit case of LLG for vanishing damping).
Finally,
we design an implicit-explicit strategy
for the treatment of the lower-order field contributions,
which significantly reduces the computational cost of the schemes,
while preserving their theoretical properties.
\end{abstract}
\maketitle

\section{Introduction}

\subsection{Dynamic micromagnetism}
Reliable numerical simulations of magnetic processes
occurring at submicrometer length scales 
are fundamental tools
to optimize the design of many technological devices,
e.g., magnetic sensors, magnetic logic gates, and hard disk drives.
The theoretical background of most simulation packages
is the theory of \emph{micromagnetism}~\cite{brown1963},
a continuum theory
which models the magnetic state of a ferromagnetic material at constant temperature
in terms of a continuous vector field with constant magnitude, the \emph{magnetization}.
A well-accepted model to describe the dynamics of the magnetization
is a nonlinear parabolic partial differential equation (PDE)
usually referred to as
\emph{Landau--Lifshitz--Gilbert equation (LLG)} \cite{ll1935,gilbert2004},
which in the so-called \emph{Landau--Lifshitz (LL) form} reads as
\begin{equation} \label{eq:intro-ll}
 \mt = - \frac{1}{1 + \alpha^2} \, \m \times \heff(\m) - \frac{\alpha}{1 + \alpha^2} \, \m \times (\m \times \heff(\m)).
\end{equation}
Here,
$\m$ denotes the normalized magnetization,
which satisfies the nonconvex unit-length constraint $\abs{\m}=1$,
$\heff(\m)$ is the effective field, whose specific expression
depends on the Gibbs free energy of the system (see~\eqref{eq:gibbs} below),
and $\alpha \ge 0$ is the Gilbert damping parameter,
which incorporates energy dissipation into the model.

Alternative forms of LLG used in the literature,
mathematically equivalent to the LL form~\eqref{eq:intro-ll},
are
the so-called \emph{Gilbert form} of LLG
\begin{equation} \label{eq:llg:gilbert}
\mt = - \m \times \heff(\m) + \alpha \, \m \times \mt\,,
\end{equation}
and
\begin{equation}\label{eq:llg:alt}
\alpha \, \mt + \m \times \mt = -\m \times (\m \times \heff(\m)),
\end{equation}
which we call the \emph{alternative form} of LLG.

The aforementioned need of fast and reliable tools
to perform micromagnetic simulations
encouraged many works concerned with the
numerical analysis of LLG,
which will also be the subject of the present paper.

\subsection{State of the art}
In the last three decades,
mathematical questions arising from the micromagnetic theory
have been the subject of several studies, from both the analytical and the
numerical point of view.
For analytical results for LLG,
we refer, e.g., to the papers~\cite{visintin1985,as1992,gh1993,cf2001,melcher2005,ds2014,ft2017b,dip2020}
and the references therein.
For an overview of numerical methods proposed for LLG (up to 2008),
we refer
to the monograph~\cite{prohl2001}
and the review articles~\cite{kp2006,garciaCervera2007,cimrak2008b}.
More recently,
several numerical schemes with a rigorous convergence analysis
have been proposed.
They differ from each other in 
the LLG formulation they are based on (usually one among~\eqref{eq:intro-ll}--\eqref{eq:llg:alt}),
in the approach used to impose the unit-length constraint at the discrete level,
and in the type of convergence result
(plain convergence towards a weak solution of LLG with minimal regularity
or convergence with rates towards a sufficiently regular strong solution).

Semi-implicit finite element methods based on (variants of) the LL form~\eqref{eq:intro-ll} of LLG
are proposed in~\cite{gao2014,an2016},
where
\textsl{a~priori} error estimates, which show their convergence towards
a smooth solution of LLG, are also established.

A class of methods referred to as
\emph{tangent plane schemes} or
\emph{projection methods}~\cite{aj2006,bkp2008,alouges2008a,bffgpprs2014,ahpprs2014,akst2014,ft2017,dpprs2017,afkl2021}
is based on a predictor-corrector approach:
At each time-step, first,
an update is computed
by solving a linear variational problem
posed in the discrete tangent space of the current magnetization;
second, the update is used to obtain the magnetization at the next time-step.
The methods proposed in~\cite{aj2006,bkp2008,alouges2008a,bffgpprs2014,ahpprs2014,ft2017}
are based on a variational formulation of~\eqref{eq:llg:alt} discretized by first-order finite elements
to compute an approximation of the linear velocity $\mt$.
The magnetization at the next time-step is then obtained via a first-order time-stepping.
To impose the unit-length constraint at the vertices of the underlying mesh,
the nodal values are projected onto the sphere
in~\cite{aj2006,bkp2008,alouges2008a,bffgpprs2014}.
The projection is omitted from the time-stepping in~\cite{ahpprs2014,ft2017}:
In this case, the approximations do not fulfill the constraint
(not even at the vertices of the mesh),
but this error can be controlled by the time-step size
(in particular, the constraint holds for the solution of LLG
towards which the finite ele\-ment approximation converges).
High-order extensions of the tangent plane approach have been proposed in~\cite{akst2014,dpprs2017,afkl2021}.
The main advantages of this class of methods are
that they do not require any time-step restriction for convergence
(\emph{unconditional} convergence)~\cite{alouges2008a,bffgpprs2014,ahpprs2014,akst2014,ft2017,dpprs2017}
and that,
despite the nonlinear nature of LLG,
only one linear system per time-step has to be solved.

A numerical scheme based on the Gilbert form~\eqref{eq:llg:gilbert} of LLG
is considered in~\cite{bp2006,prs2017}.
The method employs mass-lumped first-order finite elements
for the spatial discretization
and the second-order implicit midpoint rule for the time discretization.
The scheme is unconditionally convergent towards a weak solution of LLG,
but requires the solution of a nonlinear system of equations
per time-step.
A similar method, but based on the LL form~\eqref{eq:intro-ll} of LLG,
is proposed and analyzed in~\cite{cimrak2009}.
The latter approach is motivated by the interest in having an integrator
which is robust with respect to the limit cases of~\eqref{eq:intro-ll}
in which one of the two terms on the right-hand side tends to zero.
Indeed, in the case $\heff(\m) = \Lapl\m$,
neglecting the second (dissipative) term on the left-hand side of~\eqref{eq:intro-ll} ($\alpha \to 0$),
one obtains the so-called Schr\"odinger map equation~\cite{ssb1986},
whereas omitting the first (conservative) term,
one is led to the harmonic map heat flow~\cite{lw2008}.

The recent work~\cite{kw2018} proposes two predictor-corrector schemes for LLG
which aim to combine the features of some of the above integrators.
In the first scheme, \cite[Algorithm~1]{kw2018},
which we denote by \texttt{PC1} for the sake of brevity,
the predictor is based on the LL form~\eqref{eq:intro-ll} of LLG
(like the variational formulation used in~\cite{cimrak2009})
and employs mass-lumping for its discretization (like~\cite{bp2006,cimrak2009}).
However,
it only requires the solution of one linear system per time-step
and uses the nodal projection to impose the unit-length constraint
(like the method of~\cite{alouges2008a,bffgpprs2014}).
The second scheme, \cite[Algorithm~2]{kw2018}, which we refer to as \texttt{PC2},
uses the same predictor as \texttt{PC1},
but replaces the nodal projection step with a constraint-preserving mass-lumped
(as in~\cite{bp2006,cimrak2009}),
but linear (as in~\cite{alouges2008a,bffgpprs2014}), variational formulation.
In the paper, adapting the proof of~\cite{alouges2008a}, the authors show
convergence of the approximations generated by \texttt{PC1} towards a weak solution of LLG.
Moreover,
the expected convergence order in time of both methods 
(first-order for \texttt{PC1}, second-order for \texttt{PC2})
is empirically verified by means of numerical experiments in 2D.

Note that in the above discussion
we have restricted ourselves to methods employing the finite element method for the spatial discretization.
For other approaches based on finite differences,
we refer, e.g., to~\cite{wge2001,dsm2005,kl2017,xgwzc2020,cwx2021} and the references therein.

\subsection{Novelty of the present work}
In this work, we improve the theoretical understanding of the predictor-corrector methods proposed in~\cite{kw2018}.

First, we show that \texttt{PC1} is unconditionally well-posed, i.e., for each time-step, the variational problem to be solved admits a unique solution, which is left open in the original paper.
By closing this fundamental gap, we show that \texttt{PC1} is not only closely related to the first-order tangent plane scheme of~\cite{alouges2008a,bffgpprs2014}, but actually can even be interpreted as a slight modification of it, which explains why the convergence analysis of the two schemes is almost identical.
Furthermore, following~\cite{bffgpprs2014}, we propose an implicit-explicit (IMEX) version of \texttt{PC1}.
When considering magnetization dynamics involving the full effective field---more precisely, dynamics including the nonlocal stray field---the proposed adaptation is computationally much more attractive:
The IMEX version \texttt{PC1+IMEX} avoids the costly inner iteration in the solver of the original scheme, while preserving the experimental first-order accuracy of \texttt{PC1}, which we confirm by numerical studies in 3D.

Second, we consider the analysis of \texttt{PC2}.
While the conservation of the unit-length constraint at the vertices of the mesh in \texttt{PC1} is guaranteed (at machine precision) also in practical computations (since it is directly enforced in the method using the nodal projection), the one guaranteed by \texttt{PC2}, which follows from the variational formulation of the corrector, is lost in practice due to the inevitable use of inexact (iterative) solvers for the solution of the arising linear systems.
Hence, although the predictors of \texttt{PC1} and \texttt{PC2} coincide in theory, the well-posedness analysis of (the predictor of) \texttt{PC1} does not transfer to a practical version of \texttt{PC2}.
To cope with this problem, we establish a decomposition of the finite element space, which does not only allow us to prove unconditional well-posedness of the practical version of \texttt{PC2}, but also to extend the result, for both \texttt{PC1} and \texttt{PC2} (theoretical and practical), to the limit case $\alpha=0$ (Schr\"odinger map equation).
Moreover, following~\cite{prs2017,dpprs2017}, we adopt the IMEX treatment also for \texttt{PC2}.
In particular, in the presence of the nonlocal stray field, the proposed method \texttt{PC2+IMEX} is computationally much more attractive than its fully implicit counterpart \texttt{PC2}, while conserving the experimental second-order accuracy in time.
Again, these claims are confirmed in our numerical studies.
Stability and convergence of \texttt{PC2}, not addressed in~\cite{kw2018}, remain open also in our analysis and will be the subject of future research.
In this paper, we shed some light on this question by means of some surprising numerical experiments.

\subsection{Outline}
We conclude this section by collecting some general notation and basic vector identities used throughout the work (Section~\ref{sec:notation}). 
In Section~\ref{sec:problem_formulation}, we formulate the initial boundary value problem for LLG in which we are interested, we recall the notion of a weak solution and introduce the basic ingredients of the discretization.
Section~\ref{sec:pc1} is devoted to the first-order method:
After proving unconditional well-posedness of \texttt{PC1} in Section~\ref{sec:pc1:well_posedness}, we propose an IMEX adaptation (Section~\ref{sec:pc1:imex}) overcoming the inefficiency drawbacks of the original version, while preserving unconditional well-posedness, stability, convergence (Section~\ref{sec:pc1:stability}), and accuracy.
Section~\ref{sec:pc2} is devoted to the second-order method:
In Section~\ref{sec:pc2:well_posedness}, we first prove unconditional well-posedness of \texttt{PC2}.
Subsequently, in Section~\ref{sec:pc2:inexact}, we extend the unconditional well-posedness result to the more general formulation of the second-order algorithm, where discrete unit-length of the iterates is not assumed.
This covers, in particular, the practical version of the scheme incorporating the inevitable use of inexact (iterative) linear solvers.
Section~\ref{sec:pc2:imex} closes with a second-order accuracy preserving IMEX modification overcoming the inefficiency drawbacks of \texttt{PC2}.
Section~\ref{sec:numerics} provides numerical studies validating the applicability (Section~\ref{sec:mumag4}) and the expected accuracy (Section~\ref{sec:convOrder}) of the IMEX integrators proposed in this work.
Finally, in Section~\ref{sec:stability_pc2}, we numerically investigate the stability of \texttt{PC2}.

\subsection{General notation and vector identities}
\label{sec:notation}
Throughout this work,
we use the standard notation for Lebesgue, Sobolev, and Bochner spaces and norms.
Vector-valued functions are indicated by bold letters.
Bold letters are also used for vector-valued and matrix-valued function spaces, e.g., both $L^2(\Omega; \R^3)$ and $L^2(\Omega; \R^{3\times 3})$ are denoted by $\L^2(\Omega)$.
We denote by $\inner{\cdot}{\cdot}$ and $\norm{\cdot}$ the scalar product and the norm of $\L^2(\Omega)$, respectively, while $\abs{\cdot}$ denotes the Euclidean norm of a vector in $\R^3$ or the Frobenius norm of a matrix in $\R^{3\times 3}$.
To abbreviate notation in proofs, we write $A \lesssim B$ when $A \le cB$ for some generic constant $c > 0$, which is clear from the context and always independent of the discretization parameters.
For vector-valued functions $\f, \g \colon \Omega \to \R^3$ we use the notation
\begin{align*}
- \g \times \Grad\f := \Grad\f \times \g := (\partial_1\f \times \g, \partial_2\f \times \g, \partial_3\f \times \g)\colon \Omega \to \R^{3 \times 3}\,.
\end{align*}
We conclude this section by recalling five vector identities used regularly in this work
\begin{subequations} \label{eq:cross}
\begin{align}
\label{eq:cross:a} \a \times \b &= -\b \times \a, \\
\label{eq:cross:b} (\a \times \b) \cdot \a &= 0, \\
\label{eq:cross:c} \a \times(\b \times \c) &= (\a \cdot \c) \, \b - (\a \cdot \b) \, \c, \\
\label{eq:cross:d} (\a \times \b) \cdot \c &= \a \cdot (\b \times \c), \\
\label{eq:cross:e} (\a \times \b) \cdot (\c \times \d) &= (\a \cdot \c) \, (\b \cdot \d) - (\b \cdot \c) \, (\a \cdot \d),
\end{align}
\end{subequations}
which hold true for arbitrary $\a, \b, \c, \d \in \R^3$.

\section{Problem formulation}
\label{sec:problem_formulation}

\subsection{Landau--Lifshitz--Gilbert equation}
\label{sec:llg}
Given a bounded Lipschitz domain $\Omega \subset \R^3$ and $T > 0$, we define the space-time cylinder $\Omega_T := \Omega \times (0,T)$.
We consider the following initial boundary value problem
\begin{subequations} \label{eq:llg:ibvp}
\begin{alignat}{2}
\label{eq:llg:ll}
(1 + \alpha^2) \, \mt & = - \m \times \heff(\m) - \alpha \, \m \times (\m \times \heff(\m)) 
&\qquad& \text{in } \Omega_T,
\\
\label{eq:llg:bc}
\partial_\n \m &= \0 
&& \text{on } \partial\Omega \times (0,T),
\\
\label{eq:llg:ic}
\m(0) &= \m^0
&& \text{in } \Omega.
\end{alignat}
The unknown is the normalized magnetization $\m\colon \Omega_T \to \sphere = \{ \x \in \R^3 \colon \abs{\x} = 1 \}$.
In~\eqref{eq:llg:ll},
the effective field
\begin{equation}\label{eq:llg:heff}
\heff(\m) = \lex^2 \, \Lapl \m + \ppi(\m) + \f
\end{equation}
\end{subequations}
is the negative functional derivative of the Gibbs free energy
\begin{equation} \label{eq:gibbs}
\EE(\m)
= \frac{\lex^2}{2} \int_\Omega \abs{\Grad\m}^2 \dx
- \frac{1}{2} \int_\Omega \ppi(\m) \cdot \m \, \dx
- \int_\Omega \f \cdot \m \, \dx,
\end{equation}
where $\lex>0$ is the exchange length, $\ppi \colon \L^2(\Omega) \to \L^2(\Omega)$ is a linear, continuous, and self-adjoint operator which collects all lower-order contributions such as uniaxial magnetocrystalline anisotropy and the nonlocal stray field, and $\f\colon \Omega_T \to \R^3$ is the applied external field.
The equation is supplemented with homogeneous Neumann boundary conditions~\eqref{eq:llg:bc} and the initial condition~\eqref{eq:llg:ic}, where $\m^0\colon \Omega \to \sphere$ denotes a given initial state.
\par
Taking the scalar product of~\eqref{eq:llg:ll} with $\m$, \eqref{eq:cross:b} yields that $0 = \partial_t \m \cdot \m$ in $\Omega_T$.
Since $\abs{\m^0} = 1$ in $\Omega$ by assumption and $\partial_t (|\m|^2 / 2) = \partial_t \m \cdot \m = 0$, it follows that $\abs{\m} = 1$ in $\Omega_T$.
Moreover, any solution of~\eqref{eq:llg:ll} satisfies the energy law
\begin{equation} \label{eq:energy_law}
\frac{\mathrm{d}}{\dt} \EE(\m,\f)
= - \alpha \int_\Omega \abs{\mt}^2 \dx
- \int_\Omega \partial_t\f \cdot \m \, \dx.
\end{equation}
From this, we see that the Gilbert damping constant $\alpha$ modulates the dissipation of the system.
In particular, if $\alpha=0$ and $\f$ is constant in time, then the energy is conserved.
The PDE inherent constraint $\abs{\m}=1$ in $\Omega_T$ and the energy law~\eqref{eq:energy_law} should be satisfied (at the discrete level) by any feasible numerical method.

\subsection{Weak solution}
\label{sec:weak}
We recall the notion of a weak solution of~\eqref{eq:llg:ibvp}, which extends the one introduced in~\cite{as1992}.
\begin{definition} \label{def:weak}
Let $\m^0 \in \H^1(\Omega;\sphere)$ and $\f \in C^1([0,T];\L^2(\Omega))$.
A vector field $\m\colon\Omega_T \to \R$ is called a weak solution of~\eqref{eq:llg:ibvp}, if the following properties are satisfied:
\begin{itemize}
\item[\rm(i)] $\m\in \H^1(\Omega_T) \cap L^{\infty}(0,T;\H^1(\Omega))$ with $\abs{\m}=1$ a.e.\ in $\Omega_T$;
\item[\rm(ii)] $\m(0)=\m^0$ in the sense of traces;
\item[\rm(iii)] for all $\w\in\H^1(\Omega_T)$, it holds that
\begin{equation} \label{eq:weak:variational}
\begin{split}
& \int_0^T \inner{\mt(t)}{\w(t)} \, \dt
- \alpha \int_0^T \inner{\m(t)\times\mt(t)}{\w(t)} \, \dt \\
& \quad = \lex^2 \int_0^T \inner{\m(t)\times\Grad\m(t)}{\Grad\w(t)} \, \dt
- \int_0^T \inner{\m(t)\times\ppi(\m(t))}{\w(t)} \, \dt \\
& \qquad - \int_0^T \inner{\m(t)\times\f(t)}{\w(t)} \, \dt;
\end{split}
\end{equation}
\item[\rm(iv)] it holds that
\begin{equation} \label{eq:weak:energy}
\EE(\m(T))
+ \alpha \int_0^T \norm{\mt(t)}^2 \dt
+ \int_0^T \inner{\partial_t \f(t)}{\m(t)} \, \dt
\leq \EE(\m^0).
\end{equation}
\end{itemize}
\end{definition}
We note that~\eqref{eq:weak:variational} is a variational formulation in space-time of
the Gilbert form~\eqref{eq:llg:gilbert} of LLG,
and that~\eqref{eq:weak:energy} is a weaker version of the energy law~\eqref{eq:energy_law}.

\subsection{Discretization}
\label{sec:discretization}
For the temporal discretization, given $L \in \N$, we consider a partition $\{ t_\ell \}_{\ell=0,\dots,L}$ of the time interval $[0,T]$ with uniform time-step size $k := T/L > 0$, i.e., $t_\ell = \ell k$ for all $\ell = 0, \dots, L$.
Given a finite sequence of functions $\{ \u^\ell \}_{\ell=0,\dots,L}$, we define
\begin{equation*}
\u^{\ell+1/2} := \frac{\u^{\ell+1} + \u^{\ell}}{2}
\quad
\text{and}
\quad
d_t\u^{\ell+1} := \frac{\u^{\ell+1} - \u^{\ell}}{k}
\quad
\text{for all } \ell = 0, \dots L-1.
\end{equation*}
For the spatial discretization, we consider a regular tetrahedral triangulation $\TT_h$ of $\Omega$ with mesh size $h>0$.
We denote by $\NN_h$ the set of vertices of $\TT_h$ and by $\{ \phi_{\z} \}_{\z \in \NN_h}$ the classical nodal basis of the space $\SS^1(\TT_h)$ of $\TT_h$-piecewise linear and globally continuous discrete functions, i.e., $\phi_{\z}(\z') = \delta_{\z, \z'}$ for all $\z, \z' \in \NN_h$.
With $\{ \e_j \}_{j=1,2,3}$ the standard basis of $\R^3$, $\{ \phi_{\z}\e_j \}_{\z \in \NN_h, j=1, 2, 3}$ gives a basis of $\SS^1(\TT_h)^3$.
Note that $\SS^1(\TT_h)^3$ is a $3N$-dimensional space, with $N$ denoting the number of vertices in $\NN_h$.
We introduce the \emph{set of admissible discrete magnetizations}
\begin{equation*}
\Mh := \left\{\m_h \in \SS^1(\TT_h)^3\colon \abs{\m_h(\z)}=1 \text{ for all } \z \in \NN_h \right\}
\end{equation*}
and, for $\m_h \in \Mh$, the \emph{discrete tangent space} of $\m_h$
\begin{equation*}
\Kh[\m_h] := \left\{\vvarphi_h \in \SS^1(\TT_h)^3 \colon \m_h(\z) \cdot \vvarphi_h(\z) = 0 \text{ for all } \z \in \NN_h \right\}.
\end{equation*}
We consider the nodal interpolant $\interp\colon C^0(\overline{\Omega})\to\SS^1(\TT_h)$, which is defined by $\interp(v) = \sum_{\z \in \NN_h} v(\z) \phi_{\z}$ for all $v \in C^0(\overline{\Omega})$.
We denote the vector-valued realization of the nodal interpolant by $\Interp\colon\C^0(\overline{\Omega})\to\SS^1(\TT_h)^3$.
In $\C^0(\overline{\Omega})$, besides the standard $\L^2(\Omega)$-scalar product $\inner{\cdot}{\cdot}$, we consider the mass-lumped scalar product $\inner[h]{\cdot}{\cdot}$ defined by
\begin{equation*}
\inner[h]{\u}{\w}
= \int_\Omega \interp(\u \cdot \w) \, \dx
\quad \text{for all } \u, \w \in \C^0(\overline{\Omega}).
\end{equation*}
Using the definition of the nodal interpolant, we see that
\begin{equation} \label{eq:mass-lumping}
\inner[h]{\u}{\w}
= \sum_{\z \in \NN_h} \beta_{\z} \, \u(\z)\cdot\w(\z)
\quad \text{for all } \u, \w \in \C^0(\overline{\Omega}),
\end{equation}
where $\beta_{\z} := \int_\Omega \phi_{\z} \, \dx > 0$ for all $\z \in \NN_h$.
For discrete functions, the induced norm $\norm[h]{\cdot}$ is equivalent to the standard $\L^2(\Omega)$-norm; see~\cite[Lemma~3.9]{bartels2015}, i.e., it holds that
\begin{equation}  \label{eq:normEquivalence}
\norm{\w_h} \leq \norm[h]{\w_h} \leq \sqrt{5} \norm{\w_h}
\quad \text{for all } \w_h \in \SS^1(\TT_h)^3.
\end{equation}
We define the (negative) discrete Laplacian $-\Lapl_h \colon \H^1(\Omega) \to \SS^1(\TT_h)^3$ by
\begin{equation} \label{eq:discrete_laplacian}
- \inner[h]{\Lapl_h \w}{\w_h}
= \inner{\Grad \w}{\Grad \w_h}
\quad \text{for all } \w \in \H^1(\Omega) \text{ and } \w_h \in \SS^1(\TT_h)^3.
\end{equation}
Let $\w_h \in \SS^1(\TT_h)^3$.
With a double application of the classical inverse estimate and the norm equivalence~\eqref{eq:normEquivalence}, we see that
\begin{equation*}
\begin{split}
\norm[h]{\Lapl_h \w_h}^2
&= \inner[h]{\Lapl_h \w_h}{\Lapl_h \w_h}
\stackrel{\eqref{eq:discrete_laplacian}}{=} - \inner{\Grad \w_h}{\Grad \Lapl_h \w_h}
\leq \norm{\Grad \w_h} \norm{\Grad \Lapl_h \w_h}\\
&\leq C h^{-2} \norm[h]{\w_h} \norm[h]{\Lapl_h \w_h}.
\end{split}
\end{equation*}
This shows that
\begin{equation} \label{eq:stability_laplacian}
\norm[h]{\Lapl_h \w_h} \leq C h^{-2} \norm[h]{\w_h}
\quad \text{for all } \w_h \in \SS^1(\TT_h)^3,
\end{equation}
where $C>0$ depends only on the quasi-uniformity of the triangulation $\TT_h$.
Finally, we define the mapping $\P_h \colon \L^2(\Omega) \to \SS^1(\TT_h)^3$ by
\begin{equation} \label{eq:L2-to-h-mapping}
\inner[h]{\P_h \w}{\w_h}
= \inner{\w}{\w_h}
\quad \text{for all } \w \in \L^2(\Omega) \text{ and } \w_h \in \SS^1(\TT_h)^3.
\end{equation}
Using~\eqref{eq:mass-lumping}, it is easy to see that, for all $\w \in \L^2(\Omega)$ and all $\z \in \NN_h$, it holds that $(\P_h \w)(\z) = \beta_{\z}^{-1} \int_{\Omega} \w \phi_{\z} \, \dx$.
In particular, the computation of $\P_h \w$ does not require to solve any linear system.

\section{First-order predictor-corrector scheme}
\label{sec:pc1}
In this section, we discuss the first-order scheme proposed in~\cite{kw2018} and its connections with the integrators proposed in~\cite{bp2006} and~\cite{alouges2008a}.
Our contribution is twofold:
First, we prove unconditional well-posedness of the scheme, which fills a fundamental gap in the analysis of~\cite{kw2018}.
Second, we employ an explicit treatment of the (nonlocal) lower-order contributions to obtain a computationally superior IMEX version of the scheme, preserving (unconditional) convergence and experimental rates in time.
We first consider the method for the case $\heff(\m) = \lex^2 \Lapl \m$.
For the general case $\heff(\m) = \lex^2 \Lapl \m + \ppi(\m) + \f$, we refer to Section~\ref{sec:pc1:imex}.

\subsection{Variational formulation}
\label{sec:pc1:variational}
The following algorithm restates~\cite[Algorithm~1]{kw2018} written in terms of the discrete functions $\m_h^\ell, \v_h^\ell, \m_h^{\ell+1} \in \SS^1(\TT_h)^3$, where $\m_h^\ell \approx \m(t_\ell)$, $\v_h^\ell \approx \partial_t\m(t_\ell)$, and $\m_h^{\ell+1} \approx \m(t_{\ell+1})$.
In particular, the predictor~\eqref{eq:pc1:variational:predictor} of Algorithm~\ref{alg:pc1:variational} reformulates the $N$ equations in $\R^3$ of the predictor of~\cite[Algorithm~1]{kw2018} as an equivalent variational formulation for $\v_h^\ell$ in $\SS^1(\TT_h)^3$.
As for the tangent plane scheme~\cite{alouges2008a}, $\theta \in [0,1]$ is a parameter modulating the `degree of implicitness' of the scheme.
\begin{algorithm}[\texttt{PC1}, variational form] \label{alg:pc1:variational}
\textbf{Input:}
$\m_h^0 \in \Mh$. \\
\textbf{Loop:}
For all time-steps $\ell = 0, \dots, L-1$, iterate:
\begin{itemize}
\item[\rm(i)] Compute $\v_h^\ell \in \SS^1(\TT_h)^3$ such that, for all $\w_h \in \SS^1(\TT_h)^3$, it holds that
\begin{equation} \label{eq:pc1:variational:predictor}
\begin{split}
(1 + \alpha^2) \inner[h]{\v_h^\ell}{\w_h}
&= -\lex^2 \inner[h]{\m_h^\ell \times \Lapl_h (\m_h^\ell + \theta k \v_h^\ell)}{\w_h} \\
&\quad -\alpha\lex^2 \inner[h]{\m_h^\ell \times (\m_h^\ell \times \Lapl_h (\m_h^\ell + \theta k \v_h^\ell))}{\w_h}\,.
\end{split}
\end{equation}
\item[\rm(ii)] Define $\m_h^{\ell+1} \in \Mh$ by
\begin{equation} \label{eq:pc1:variational:corrector}
\m_h^{\ell+1}(\z) := \frac{\m_h^{\ell}(\z) + k \v_h^{\ell}(\z)}{\abs{\m_h^{\ell}(\z) + k \v_h^{\ell}(\z)}} \in \sphere
\quad \text{for all } \z \in \NN_h.
\end{equation}
\end{itemize}
\textbf{Output:}
Sequence of discrete functions $\left\{(\v_h^\ell,\m_h^{\ell+1})\right\}_{\ell= 0}^{L-1}$.
\end{algorithm}

\subsection{Unconditional well-posedness}
\label{sec:pc1:well_posedness}
The predictor~\eqref{eq:pc1:variational:predictor} can be written as: Find $\v_h^\ell \in \SS^1(\TT_h)^3$ such that
\begin{align*}
a_{\operatorname{pre}}[\m_h^\ell](\v_h^\ell, \w_h) = F_{\operatorname{pre}}[\m_h^\ell](\w_h) \qquad \text{for all } \w_h \in \SS^1(\TT_h)^3\,,
\end{align*}
with some linear form $F_{\operatorname{pre}}[\m_h^\ell]$ and the bilinear form $a_{\operatorname{pre}}[\m_h^\ell]$ on $\SS^1(\TT_h)^3$ reading
\begin{align*}
a_{\operatorname{pre}}[\m_h^\ell](\v_h^\ell, \w_h)
&:= (1 + \alpha^2)\inner[h]{\v_h^\ell}{\w_h} 
+ \lex^2 \theta k \inner[h]{\m_h^\ell \times \Lapl_h\v_h^\ell}{\w_h} \\
&\quad + \alpha \lex^2 \theta k \inner[h]{\m_h^\ell \times (\m_h^\ell \times \Lapl_h\v_h^\ell)}{\w_h} \,.
\end{align*}
From the boundedness of $\m_h^\ell$ in $\L^\infty(\Omega)$ guaranteed by the nodal projection~\eqref{eq:pc1:variational:corrector} and an inverse estimate on the discrete Laplacian~\eqref{eq:stability_laplacian} we have
\begin{align*}
a_{\operatorname{pre}}[\m_h^\ell](\w_h, \w_h) 
\ge (1 - C kh^{-2}) \norm[h]{\w_h}^2\,.
\end{align*}
Hence, assuming the CFL condition $k = o(h^2)$ implies the coercivity of $a_{\operatorname{pre}}[\m_h^\ell]$ for sufficiently small $h$ and $k$.
However, this undesirable restriction is a consequence of naively using the inverse estimate, and can be avoided.

For arbitrary $\alpha > 0$ the upcoming refined analysis allows to drop any CFL-type assumptions on the discretization parameters:
In Lemma~\ref{lemma:pc1:Mh_Kh}, we first collect two basic properties of Algorithm~\ref{alg:pc1:variational}, which turn out to be sufficient to prove unconditional well-posedness of the algorithm in Theorem~\ref{thm:pc1:well_posedness}; also see Remark~\ref{re:pc1:nature}.
\begin{lemma}\label{lemma:pc1:Mh_Kh}
Let $\m_h^\ell \in \Mh$.
Suppose that the solution $\v_h^\ell \in \SS^1(\TT_h)^3$ to~\eqref{eq:pc1:variational:predictor} exists.
Then, $\v_h^\ell \in \Kh[\m_h^\ell]$, and~\eqref{eq:pc1:variational:corrector} provides a well-defined $\m_h^{\ell+1} \in \Mh$.
\end{lemma}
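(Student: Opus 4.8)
The claim has two parts: (a) any solution $\v_h^\ell$ of the predictor lies in the discrete tangent space $\Kh[\m_h^\ell]$, and (b) given this, the nodal projection~\eqref{eq:pc1:variational:corrector} is well-defined and lands in $\Mh$. I would start with (a), which is the heart of the lemma. The key observation is that testing the predictor~\eqref{eq:pc1:variational:predictor} with a carefully chosen $\w_h$ should kill the right-hand side. Since the mass-lumped product~\eqref{eq:mass-lumping} decouples over vertices, it suffices to show $\m_h^\ell(\z)\cdot\v_h^\ell(\z)=0$ for each $\z\in\NN_h$ separately. So I would test with $\w_h=\phi_{\z}\,(\m_h^\ell(\z)\times\u)$ for arbitrary $\u\in\R^3$ — or more directly, exploit that the right-hand side of~\eqref{eq:pc1:variational:predictor}, evaluated nodally, is at every vertex $\z$ a linear combination of $\m_h^\ell(\z)\times(\dots)$ and $\m_h^\ell(\z)\times(\m_h^\ell(\z)\times(\dots))$, both of which are orthogonal to $\m_h^\ell(\z)$ by~\eqref{eq:cross:b}. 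Concretely, choosing $\w_h=\phi_{\z}\m_h^\ell(\z)$ gives, via~\eqref{eq:mass-lumping}, on the left $(1+\alpha^2)\beta_{\z}\,\v_h^\ell(\z)\cdot\m_h^\ell(\z)$ and on the right $\beta_{\z}$ times the dot product of $\m_h^\ell(\z)$ with vectors of the form $\m_h^\ell(\z)\times(\cdots)$, which vanishes by~\eqref{eq:cross:b}. Hence $\v_h^\ell(\z)\cdot\m_h^\ell(\z)=0$ for all $\z$, i.e.\ $\v_h^\ell\in\Kh[\m_h^\ell]$.

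For part (b), I need $\abs{\m_h^\ell(\z)+k\v_h^\ell(\z)}\neq 0$ so that the quotient in~\eqref{eq:pc1:variational:corrector} makes sense. This is immediate from orthogonality: by the Pythagorean identity,
\begin{equation*}
\abs{\m_h^\ell(\z)+k\v_h^\ell(\z)}^2 = \abs{\m_h^\ell(\z)}^2 + k^2\abs{\v_h^\ell(\z)}^2 = 1 + k^2\abs{\v_h^\ell(\z)}^2 \ge 1,
\end{equation*}
using $\abs{\m_h^\ell(\z)}=1$ (since $\m_h^\ell\in\Mh$) and $\m_h^\ell(\z)\cdot\v_h^\ell(\z)=0$. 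In particular the denominator is bounded below by $1$, so $\m_h^{\ell+1}$ is a well-defined element of $\SS^1(\TT_h)^3$ (it is piecewise linear, being determined by its nodal values), and by construction $\abs{\m_h^{\ell+1}(\z)}=1$ at every vertex, hence $\m_h^{\ell+1}\in\Mh$.

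I do not anticipate a genuine obstacle here — the lemma is essentially a structural observation rather than a hard estimate. The only point requiring a little care is making the nodal test-function argument rigorous: one must verify that $\phi_{\z}\m_h^\ell(\z)$ (a constant vector times a scalar hat function) is indeed an admissible test function in $\SS^1(\TT_h)^3$, which it is, and that the mass-lumped inner product~\eqref{eq:mass-lumping} isolates the single vertex $\z$, which it does since $\phi_{\z}(\z')=\delta_{\z,\z'}$. Everything else is a direct application of the cross-product identity~\eqref{eq:cross:b} and the definition of $\Mh$ and $\Kh[\m_h^\ell]$.
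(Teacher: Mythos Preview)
Your proposal is correct and follows essentially the same route as the paper: test~\eqref{eq:pc1:variational:predictor} with $\w_h=\phi_{\z}\,\m_h^\ell(\z)$ and use~\eqref{eq:mass-lumping} together with~\eqref{eq:cross:b} to obtain $\v_h^\ell\in\Kh[\m_h^\ell]$, then use the Pythagorean identity to bound the denominator in~\eqref{eq:pc1:variational:corrector} below by $1$. The paper's argument is identical in structure and detail.
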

\begin{proof}
For arbitrary $\z \in \NN_h$, with $\phi_{\z} \in \SS^1(\TT_h)$ denoting the hat function with $\phi_{\z}(\z') = \delta_{\z, \z'}$ for all $\z' \in \NN_h$, we choose $\w_h := \m_h^\ell(\z)\phi_{\z} \in \SS^1(\TT_h)^3$ in~\eqref{eq:pc1:variational:predictor} to see
\begin{align*}
\m_h^\ell(\z) \cdot \v_h^\ell(\z) 
\stackrel{\eqref{eq:mass-lumping}}{=} \beta_{\z}^{-1} \inner[h]{\v_h^\ell}{\m_h^\ell(\z)\phi_{\z}} 
\stackrel{\eqref{eq:pc1:variational:predictor}, \eqref{eq:cross:b}}{=} 0 \,.
\end{align*}
Hence, $\v_h^\ell \in \SS^1(\TT_h)^3$ belongs to $\Kh[\m_h^\ell]$.\\
Well-posedness of \eqref{eq:pc1:variational:corrector} follows immediately from $\v_h^\ell \in \Kh[\m_h^\ell]$ via
\begin{align*}
|\m_h^\ell(\z) + k\v_h^\ell(\z)|^2 = |\m_h^\ell(\z)|^2 + k^2|\v_h^\ell(\z)|^2 \ge |\m_h^\ell(\z)|^2 = 1 \quad \text{for all } \ell = 0, \dots, L-1\,.
\end{align*}
Consequently, for all $\z \in \NN_h$ the denominator in~\eqref{eq:pc1:variational:corrector} is bounded below by $|\m_h^\ell(\z)| = 1$ and the corrector step of Algorithm~\ref{alg:pc1:variational} is always well-posed.\\
The third claim $\m_h^{\ell+1} \in \Mh$ follows directly from the explicit projection in~\eqref{eq:pc1:variational:corrector}.
\end{proof}
These two observations are already sufficient to prove the first main contribution of this work.
\begin{theorem}\label{thm:pc1:well_posedness}
Let $\alpha > 0$. Then, Algorithm~\ref{alg:pc1:variational} is unconditionally well-posed for any input $\m_h^0 \in \Mh$, i.e., for all $\ell = 0, \dots, L-1$ the predictor~\eqref{eq:pc1:variational:predictor} admits a unique solution $\v_h^\ell \in \SS^1(\TT_h)^3$ and the corrector~\eqref{eq:pc1:variational:corrector} is well-posed providing $\m_h^{\ell+1} \in \Mh$.
\end{theorem}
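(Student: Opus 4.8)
The plan is to reduce the statement to an injectivity assertion. Since the predictor~\eqref{eq:pc1:variational:predictor} is a \emph{square} linear system on the finite-dimensional space $\SS^1(\TT_h)^3$, unique solvability is equivalent to the implication: if $a_{\operatorname{pre}}[\m_h^\ell](\v_h^\ell,\w_h)=0$ for all $\w_h\in\SS^1(\TT_h)^3$, then $\v_h^\ell=\0$. Once this is established, Lemma~\ref{lemma:pc1:Mh_Kh} applied to the (now unique) solution $\v_h^\ell$ immediately gives $\v_h^\ell\in\Kh[\m_h^\ell]$, well-definedness of the corrector~\eqref{eq:pc1:variational:corrector}, and $\m_h^{\ell+1}\in\Mh$; an induction over $\ell=0,\dots,L-1$ starting from $\m_h^0\in\Mh$ then yields the full claim. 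So I would fix $\m_h^\ell\in\Mh$ and study the homogeneous problem.

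The natural first attempt is to test with $\w_h=\v_h^\ell$ itself. Arguing as in the proof of Lemma~\ref{lemma:pc1:Mh_Kh} (choose $\w_h=\m_h^\ell(\z)\phi_{\z}$; the two cross-product terms vanish nodewise by~\eqref{eq:cross:b}) gives $\v_h^\ell\in\Kh[\m_h^\ell]$, i.e.\ $\m_h^\ell(\z)\cdot\v_h^\ell(\z)=0$ for all $\z\in\NN_h$. Inserting~\eqref{eq:cross:c} and using $\abs{\m_h^\ell(\z)}=1$ together with this nodewise orthogonality, the double cross-product term simplifies to $\alpha\lex^2\theta k\,\norm{\Grad\v_h^\ell}^2$, so that testing with $\v_h^\ell$ leaves
\[
(1+\alpha^2)\norm[h]{\v_h^\ell}^2 + \lex^2\theta k\,\inner[h]{\m_h^\ell\times\Lapl_h\v_h^\ell}{\v_h^\ell} + \alpha\lex^2\theta k\,\norm{\Grad\v_h^\ell}^2 = 0 .
\]
The obstruction is the middle, gyroscopic term: it carries no definite sign, which is precisely why the crude estimate preceding the statement yields only the CFL-type coercivity via the inverse inequality~\eqref{eq:stability_laplacian}.

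The key idea I would use is to test the homogeneous equation instead with $\w_h=\Lapl_h\v_h^\ell\in\SS^1(\TT_h)^3$. Then: (i) the gyroscopic term vanishes \emph{identically}, being the mass-lumped sum of $(\m_h^\ell(\z)\times\Lapl_h\v_h^\ell(\z))\cdot\Lapl_h\v_h^\ell(\z)=0$ by~\eqref{eq:cross:b}; (ii) by the definition~\eqref{eq:discrete_laplacian} of $\Lapl_h$, the first term equals $-(1+\alpha^2)\norm{\Grad\v_h^\ell}^2$; (iii) by~\eqref{eq:cross:c}, $\abs{\m_h^\ell(\z)}=1$, and the Cauchy--Schwarz inequality applied nodewise, the double cross-product term equals $\alpha\lex^2\theta k\sum_{\z\in\NN_h}\beta_{\z}\big((\m_h^\ell(\z)\cdot\Lapl_h\v_h^\ell(\z))^2-\abs{\Lapl_h\v_h^\ell(\z)}^2\big)\le 0$. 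Combining the three contributions gives $(1+\alpha^2)\norm{\Grad\v_h^\ell}^2\le 0$, hence $\Grad\v_h^\ell=\0$ a.e.; then $\Lapl_h\v_h^\ell=\0$ (again by~\eqref{eq:discrete_laplacian}), and substituting $\Lapl_h\v_h^\ell=\0$ back into the homogeneous equation collapses it to $(1+\alpha^2)\inner[h]{\v_h^\ell}{\w_h}=0$ for all $\w_h$, so that $\v_h^\ell=\0$.

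This proves injectivity, hence unique solvability of the predictor, and Lemma~\ref{lemma:pc1:Mh_Kh} then finishes the proof exactly as sketched above. I expect the only genuine difficulty to be spotting the test function $\Lapl_h\v_h^\ell$: it works because the mass-lumped inner product turns the problematic term into a nodewise triple product of the shape $(\a\times\b)\cdot\b=0$; everything else is a routine application of the identities in~\eqref{eq:cross}. I would also note that the argument does not actually use $\alpha>0$, which anticipates the extension to the vanishing-damping (Schr\"odinger map) limit announced in the introduction.
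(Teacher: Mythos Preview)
Your proof is correct but follows a genuinely different route from the paper's. The paper does \emph{not} argue by injectivity of the square system. Instead it shows that, for $\m_h^\ell\in\Mh$, the predictor~\eqref{eq:pc1:variational:predictor} is \emph{equivalent} to the problem
\[
\alpha\inner[h]{\v_h^\ell}{\vvarphi_h}+\inner[h]{\m_h^\ell\times\v_h^\ell}{\vvarphi_h}
=\lex^2\inner[h]{\Lapl_h(\m_h^\ell+\theta k\v_h^\ell)}{\vvarphi_h}
\quad\text{for all }\vvarphi_h\in\Kh[\m_h^\ell],
\]
posed in the discrete tangent space, and then invokes Lax--Milgram (the bilinear form satisfies $a_{\operatorname{alt}}(\vvarphi_h,\vvarphi_h)=\alpha\norm[h]{\vvarphi_h}^2+\lex^2\theta k\norm{\Grad\vvarphi_h}^2$). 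The equivalence is established by testing~\eqref{eq:pc1:variational:predictor} with $\w_h=\Interp(\alpha\vvarphi_h+\vvarphi_h\times\m_h^\ell)$ and, conversely, the tangent-space problem with $\vvarphi_h=\Interp(\m_h^\ell\times\w_h+\alpha\,\m_h^\ell\times(\w_h\times\m_h^\ell))$; this mirrors on the discrete level the algebraic passage from the LL form~\eqref{eq:llg:ll} to the alternative form~\eqref{eq:llg:alt}. What the paper's route buys is structural insight: it identifies \texttt{PC1} as the mass-lumped variant of the Alouges tangent-plane scheme, which explains why the stability and convergence analysis transfers. What your route buys is economy and generality: the single test function $\w_h=\Lapl_h\v_h^\ell$ kills the gyroscopic term and gives injectivity without ever using $\alpha>0$ (indeed, your step~(iii) does not even need $\abs{\m_h^\ell(\z)}=1$, since Cauchy--Schwarz already gives $(\m\cdot\q)^2-\abs{\m}^2\abs{\q}^2\le0$). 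This anticipates both Remark~\ref{re:pc1:nature}(iv) and the saddle-point argument of Theorem~\ref{thm:pc2:inexact:well_posedness}, but by a considerably more elementary path.
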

\begin{proof}
Well-posedness of the corrector~\eqref{eq:pc1:variational:corrector} and $\m_h^{\ell+1} \in \Mh$ follow from Lemma~\ref{lemma:pc1:Mh_Kh}.
Transforming~\eqref{eq:pc1:variational:predictor} into a coercive system in the discrete tangent space, we prove well-posedness of the predictor in three steps:
\begin{itemize}
\item \textbf{Step~1:}
The predictor of Algorithm~\ref{alg:pc1:variational} can be reformulated as a well-posed system.
\end{itemize}
We claim that $\v_h^\ell \in \SS^1(\TT_h)^3$ satisfies~\eqref{eq:pc1:variational:predictor} for all $\w_h \in \SS^1(\TT_h)^3$, if and only if it satisfies $\v_h^\ell \in \Kh[\m_h^\ell]$ as well as
\begin{equation} \label{eq:pc1:tpsh}
\alpha \inner[h]{\v_h^\ell}{\vvarphi_h}
+ \inner[h]{\m_h^\ell \times \v_h^\ell}{\vvarphi_h}
= \lex^2 \inner[h]{\Lapl_h (\m_h^\ell + \theta k \v_h^\ell)}{\vvarphi_h}
\quad \text{for all } \vvarphi_h \in \Kh[\m_h^\ell]\,.
\end{equation}
This formulation can be written as follows:
Find $\v_h^{\ell} \in \Kh[\m_h^\ell]$ such that
\begin{equation*}
a_{\operatorname{alt}}[\m_h^{\ell}]( \v_h^{\ell}, \vvarphi_h )
= \lex^2 \inner[h]{\Lapl_h \m_h^\ell}{\vvarphi_h}
\quad \text{for all } \vvarphi_h \in \Kh[\m_h^\ell],
\end{equation*}
where the bilinear form $a_{\operatorname{alt}}[\m_h^\ell] \colon \Kh[\m_h^\ell] \times \Kh[\m_h^\ell] \to \R$ is defined by
\begin{equation*}
a_{\operatorname{alt}}[ \m_h^\ell]( \v_h^{\ell}, \vvarphi_h )
:= 
\alpha \inner[h]{\v_h^\ell}{\vvarphi_h}
+ \inner[h]{\m_h^\ell \times \v_h^\ell}{\vvarphi_h}
- \lex^2 \theta k \inner[h]{\Lapl_h \v_h^\ell}{\vvarphi_h}.
\end{equation*}
For $\alpha > 0$, the bilinear form satisfies the ellipticity property
\begin{equation*}
a_{\operatorname{alt}}[ \m_h^\ell]( \vvarphi_h, \vvarphi_h )
= 
\alpha \norm[h]{\vvarphi_h}^2
+ \lex^2 \theta k \norm{\Grad \vvarphi_h}^2
\quad
\text{for all } \vvarphi_h \in \Kh[\m_h^\ell]\,,
\end{equation*}
and the problem~\eqref{eq:pc1:tpsh} is well-posed by the Lax--Milgram theorem.
To conclude the proof, it remains to show the claimed equivalence of~\eqref{eq:pc1:variational:predictor} and~\eqref{eq:pc1:tpsh}.
\begin{itemize}
\item \textbf{Step~2:}
Any solution $\v_h^\ell \in \SS^1(\TT_h)^3$ of~\eqref{eq:pc1:variational:predictor} also solves~\eqref{eq:pc1:tpsh}.
\end{itemize}
Given arbitrary $\vvarphi_h \in \Kh[\m_h^\ell]$, we choose $\w_h = \Interp(\alpha\vvarphi_h + \vvarphi_h \times \m_h^\ell) \in \SS^1(\TT_h)^3$ in~\eqref{eq:pc1:variational:predictor} to obtain
\begin{align}\label{eq:from_kw_predictor_to_tpsh}
\notag&(1+\alpha^2)\alpha\inner[h]{\v_h^\ell}{\vvarphi_h} 
+ (1+\alpha^2)\inner[h]{\v_h^\ell}{\vvarphi_h \times \m_h^\ell}
= - \alpha\lex^2 \inner[h]{\m_h^\ell \times \Lapl_h (\m_h^\ell + \theta k \v_h^\ell)}{\vvarphi_h} \\
\notag&\quad - \lex^2 \inner[h]{\m_h^\ell \times \Lapl_h (\m_h^\ell + \theta k \v_h^\ell)}{\vvarphi_h \times \m_h^\ell}
 -\alpha^2\lex^2 \inner[h]{\m_h^\ell \times (\m_h^\ell \times \Lapl_h (\m_h^\ell + \theta k \v_h^\ell))}{\vvarphi_h}\\
&\quad\phantom{=}- \alpha\lex^2 \inner[h]{\m_h^\ell \times (\m_h^\ell \times \Lapl_h (\m_h^\ell + \theta k \v_h^\ell))}{\vvarphi_h \times \m_h^\ell}\,.
\end{align}
By~\eqref{eq:cross:d} the left-hand side of~\eqref{eq:from_kw_predictor_to_tpsh} resembles the left-hand side of~\eqref{eq:pc1:tpsh} scaled by $(1+\alpha^2)$.
From $\m_h^{\ell} \in \Mh$ and $\vvarphi_h \in \Kh[\m_h^\ell]$, we infer $\interp(|\m_h^\ell|^2) = 1$ and $\interp(\m_h^\ell\cdot\vvarphi_h) = 0$ in $\Omega$.
Hence, using the vector identities~\eqref{eq:cross:b}--\eqref{eq:cross:e}, the first and the last term on the right-hand side of~\eqref{eq:from_kw_predictor_to_tpsh} cancel out, and~\eqref{eq:from_kw_predictor_to_tpsh} equivalently reads
\begin{align*}
(1 + \alpha^2)
\big(\alpha \inner[h]{\v_h^\ell}{\vvarphi_h}
+ \inner[h]{\m_h^\ell \times \v_h^\ell}{\vvarphi_h}\big)
 = (1+\alpha^2)\lex^2 \inner[h]{\Lapl_h (\m_h^\ell + \theta k \v_h^\ell)}{\vvarphi_h} \,.
\end{align*}
Now multiplying~\eqref{eq:from_kw_predictor_to_tpsh} by $1 / (1+\alpha^2)$, we conclude that any $\v_h^\ell \in \SS^1(\TT_h)^3$ satisfying~\eqref{eq:pc1:variational:predictor} necessarily satisfies~\eqref{eq:pc1:tpsh} and, according to Lemma~\ref{lemma:pc1:Mh_Kh}, belongs to $\Kh[\m_h^\ell]$ itself.
\begin{itemize}
\item \textbf{Step~3:}
Any solution $\v_h^\ell \in \Kh[\m_h^\ell]$ of~\eqref{eq:pc1:tpsh} also solves~\eqref{eq:pc1:variational:predictor}.
\end{itemize}
Given arbitrary $\w_h \in \SS^1(\TT_h)^3$, we choose $\vvarphi_h = \Interp\big(\m_h^\ell\times\w_h + \alpha\m_h^\ell \times (\w_h \times \m_h^\ell)\big) \in \Kh[\m_h^\ell]$ in~\eqref{eq:pc1:tpsh} to obtain
\begin{align}\label{eq:from_tpsh_to_kw_predictor}
\notag&\alpha\inner[h]{\v_h^\ell}{\m_h^\ell\times\w_h} 
+ \alpha^2\inner[h]{\v_h^\ell}{\m_h^\ell \times (\w_h \times \m_h^\ell)}\\
&\quad + \inner[h]{\m_h^\ell \times \v_h^\ell}{\m_h^\ell \times \w_h}
+ \alpha\inner[h]{\m_h^\ell \times \v_h^\ell}{\m_h^\ell \times (\w_h \times \m_h^\ell)} \\
\notag&\quad = \lex^2 \inner[h]{\Lapl_h (\m_h^\ell + \theta k \v_h^\ell)}{\m_h^\ell \times \w_h}
+ \alpha\lex^2 \inner[h]{\Lapl_h (\m_h^\ell + \theta k \v_h^\ell)}{\m_h^\ell \times (\w_h \times \m_h^\ell)}\,.
\end{align}
From $\m_h^{\ell} \in \Mh$ and $\v_h^\ell \in \Kh[\m_h^\ell]$, we infer $\interp(|\m_h^\ell|^2) = 1$ and $\interp(\m_h^\ell\cdot\v_h^\ell) = 0$ in $\Omega$.
Hence, by the vector identities~\eqref{eq:cross:b}--\eqref{eq:cross:e}, the first and the last term on the left-hand side of~\eqref{eq:from_tpsh_to_kw_predictor} cancel out, while the second and third term on the left-hand side of~\eqref{eq:from_tpsh_to_kw_predictor} add up to the left-hand side of~\eqref{eq:pc1:variational:predictor}.
Further, by~\eqref{eq:cross:d} the right-hand side of~\eqref{eq:from_tpsh_to_kw_predictor} resembles the right-hand side of~\eqref{eq:pc1:variational:predictor}.
We conclude that any $\v_h^\ell \in \Kh[\m_h^\ell] \subset \SS^1(\TT_h)^3$ satisfying~\eqref{eq:pc1:tpsh} necessarily satisfies~\eqref{eq:pc1:variational:predictor}.
Ultimately, we have shown that~\eqref{eq:pc1:variational:predictor} is equivalent to~\eqref{eq:pc1:tpsh}, which always allows for a unique solution as shown in Step~1.
\end{proof}
\begin{remark}\label{re:pc1:nature}
{\rm(i)}
Let $\w\colon \Omega \to \R^3$ be an arbitrary smooth test function.
Writing $\m^\ell := \m(t_\ell)$ and $\v^\ell := \partial_t \m(t_\ell)$, the variational formulation of the LL form~\eqref{eq:llg:ll} of LLG at time  $t_\ell \in (0, T)$ reads
\begin{equation*}
(1 + \alpha^2) \inner{\v^\ell}{\w}
\stackrel{\phantom{\eqref{eq:cross:a}}}{=} - \lex^2 \, \inner{\m^\ell \times \Lapl \m^\ell}{\w}
- \alpha \lex^2 \, \inner{\m^\ell \times (\m^\ell \times \Lapl \m^\ell)}{\w}\,.
\end{equation*}
The discrete variational formulation~\eqref{eq:pc1:variational:predictor} can be seen as a discrete mass-lumped version of the latter, where the effective field is treated implicitly in time.\\
{\rm(ii)} The core of the proof of Theorem~\ref{thm:pc1:well_posedness} is the equivalent reformulation of the predictor step~\eqref{eq:pc1:variational:predictor} as well-posed system~\eqref{eq:pc1:tpsh} in the discrete tangent space $\Kh[\m_h^\ell]$.
For $\alpha > 0$, the reformulated system is unconditionally well-posed and corresponds to a discretization of the alternative form of LLG \eqref{eq:llg:alt}.
Using~\eqref{eq:cross:c} and $|\m|^2 \equiv 1$, the formulation~\eqref{eq:llg:alt} is directly obtained from the LL form~\eqref{eq:llg:ll} via $(\alpha \cdot \eqref{eq:llg:ll} + \m \times \eqref{eq:llg:ll}) / (1 + \alpha^2)$.
Step~3 of the proof of Theorem~\ref{thm:pc1:well_posedness} resembles the analogous computations on a discrete level.
We emphasize, that the mass-lumped scalar product $\inner[h]{\cdot}{\cdot}$ as well as $\m_h^\ell \in \Mh$ and $\v_h^\ell \in \Kh[\m_h^\ell]$ are the crucial ingredients in the proof of Theorem~\ref{thm:pc1:well_posedness}.\\
{\rm(iii)} With the reformulation~\eqref{eq:pc1:tpsh}, we fully understand the real nature of the first-order integrator from~\cite{kw2018}:
It is a predictor-corrector scheme which combines the approaches of Bartels \& Prohl~\cite{bp2006} (mass-lumping~\eqref{eq:mass-lumping}, discrete Laplacian~\eqref{eq:discrete_laplacian}) and Alouges~\cite{alouges2008a} (degree of implicitness $\theta$, projection update~\eqref{eq:pc1:variational:corrector}, unknown approximates time derivative).
The predictor step~\eqref{eq:pc1:variational:predictor} is a mass-lumped discrete variational formulation of the LL form~\eqref{eq:llg:ll} of LLG.
The equivalent variational formulation~\eqref{eq:pc1:tpsh} is a mass-lumped variational formulation of the alternative form~\eqref{eq:llg:alt} of LLG and, in particular, is the mass-lumped version of the predictor step of the tangent plane scheme from~\cite{alouges2008a}.
Analogously to the tangent plane scheme, the corrector step of Algorithm~\ref{alg:pc1:variational} employs the nodal projection to enforce the modulus constraint at the vertices of the triangulations.\\
{\rm(iv)} While the proof of Theorem~\ref{thm:pc1:well_posedness} emphasizes the close relation of Algorithm~\ref{alg:pc1:variational} to the first-order tangent plane scheme, it is restricted to $\alpha > 0$.
In fact, Theorem~\ref{thm:pc1:well_posedness} can also be proved for the limit case $\alpha = 0$; see Remark~\ref{re:proof:inexact}{\rm(iii)}--{\rm(iv)} below.
\end{remark}

\subsection{Including lower-order contributions}
\label{sec:pc1:imex}
In this section, we discuss the extension of the scheme to the general case $\heff(\m) = \lex^2 \, \Lapl \m + \ppi(\m) + \f$.
We start by recalling the definition~\eqref{eq:L2-to-h-mapping} of the mapping $\P_h \colon \L^2(\Omega) \to \L^2(\Omega)$ and assume that we are given an operator $\ppi_h\colon\L^2(\Omega) \to \L^2(\Omega)$ which approximates $\ppi$, e.g., in the case of the nonlocal stray field $\ppi(\m) = \hstray$, $\ppi_h$ is a method for the approximation of the magnetostatic Maxwell equations, e.g., via the hybrid FEM-BEM method from~\cite{fk1990}.
\par
In the original first-order integrator from~\cite{kw2018}, the lower-order contributions are treated implicitly in time.
Rewritten as a mass-lumped discrete LL formulation like~\eqref{eq:pc1:variational:predictor}, the predictor step of~\cite[Algorithm~1]{kw2018} reads as follows:
Find $\v_h^\ell \in \SS^1(\TT_h)^3$ such that
\begin{align}\label{eq:pc1:implicit_pi}
\notag&(1 + \alpha^2) \inner[h]{\v_h^\ell}{\w_h}
= -\inner[h]{\m_h^\ell \times [\lex^2\Lapl_h (\m_h^\ell + \theta k \v_h^\ell) + \P_h(\ppi_h(\m_h^\ell + \theta k \v_h^\ell) + \f^{\ell + \theta})]}{\w_h} \\
&\qquad -\alpha\inner[h]{\m_h^\ell \times (\m_h^\ell \times [\lex^2\Lapl_h (\m_h^\ell + \theta k \v_h^\ell) + \P_h(\ppi_h(\m_h^\ell + \theta k \v_h^\ell) + \f^{\ell + \theta})])}{\w_h}
\end{align}
for all $\w_h \in \SS^1(\TT_h)^3$.
Here, $\f^{\ell+\theta} = \f(t_\ell + \theta k)$ for all $\ell = 0, \dots, L-1$.
However, this approach for the inclusion of the lower-order terms is not very attractive from the computational point of view:
Indeed, the variational formulation comprises the term $\ppi_h(\v_h^\ell)$ which requires to solve a (possibly nonlocal) problem for the unknown.
An implementation of this scheme would then be based on a costly inner iteration.
\par
From our previous work on the tangent plane scheme~\cite{bffgpprs2014,dpprs2017} and on the midpoint scheme~\cite{prs2017}, we know that an explicit treatment is favorable:
Therefore, we change the above variational formulation: Find $\v_h^\ell \in \SS^1(\TT_h)^3$ such that
\begin{equation*}
\begin{split}
(1 + \alpha^2) \inner[h]{\v_h^\ell}{\w_h}
&= -\inner[h]{\m_h^\ell \times [\lex^2\Lapl_h (\m_h^\ell + \theta k \v_h^\ell) + \P_h(\ppi_h(\m_h^\ell) + \f^{\ell})]}{\w_h} \\
&\quad -\alpha\inner[h]{\m_h^\ell \times (\m_h^\ell \times [\lex^2\Lapl_h (\m_h^\ell + \theta k \v_h^\ell) + \P_h(\ppi_h(\m_h^\ell) + \f^{\ell})])}{\w_h}
\end{split}
\end{equation*}
for all $\w_h \in \SS^1(\TT_h)^3$.
Only the leading-order exchange contribution is treated implicitly in time, while the lower-order contributions are treated explicitly.
This does not spoil the convergence result of the scheme (since the nodal projection already restricts the scheme to first-order in time) and it is computationally much more attractive.
To sum up, we consider the following implicit-explicit (IMEX) algorithm.
\begin{algorithm}[\texttt{PC1+IMEX}] \label{alg:pc1:imex}
\textbf{Input:}
$\m_h^0 \in \Mh$. \\
\textbf{Loop:}
For all time-steps $\ell = 0, \dots, L-1$, iterate:
\begin{itemize}
\item[\rm(i)] Compute $\P_h(\ppi_h(\m_h^\ell)) \in \SS^1(\TT_h)^3$.
\item[\rm(ii)] Compute $\v_h^\ell \in \SS^1(\TT_h)^3$ such that, for all $\w_h \in \SS^1(\TT_h)^3$, it holds that
\begin{align} \label{eq:pc1:imex:predictor}
(1 + \alpha^2) \inner[h]{\v_h^\ell}{\w_h}
&= -\inner[h]{\m_h^\ell \times [\lex^2\Lapl_h (\m_h^\ell + \theta k \v_h^\ell) + \P_h(\ppi_h(\m_h^\ell) + \f^{\ell})]}{\w_h} \\
\notag&\quad -\alpha\inner[h]{\m_h^\ell \times (\m_h^\ell \times [\lex^2\Lapl_h (\m_h^\ell + \theta k \v_h^\ell) + \P_h(\ppi_h(\m_h^\ell) + \f^{\ell})])}{\w_h}\,.
\end{align}
\item[\rm(iii)] Define $\m_h^{\ell+1} \in \Mh$ by
\begin{equation} \label{eq:pc1:imex:corrector}
\m_h^{\ell+1}(\z) := \frac{\m_h^{\ell}(\z) + k \v_h^{\ell}(\z)}{\abs{\m_h^{\ell}(\z) + k \v_h^{\ell}(\z)}} \in \sphere
\quad \text{for all } \z \in \NN_h.
\end{equation}
\end{itemize}
\textbf{Output:}
Sequence of discrete functions $\left\{(\v_h^\ell,\m_h^{\ell+1})\right\}_{\ell= 0}^{L-1}$.
\end{algorithm}

\subsection{Stability of Algorithm~\ref{alg:pc1:imex}}
\label{sec:pc1:stability}
Well-posedness of Algorithm~\ref{alg:pc1:imex} follows from well-posedness of Algorithm~\ref{alg:pc1:variational} (Theorem~\ref{thm:pc1:well_posedness}), as the system matrices for the linear systems corresponding to the left-hand sides of~\eqref{eq:pc1:imex:predictor} and~\eqref{eq:pc1:variational:predictor}, respectively, coincide.
\par
For stability of Algorithm~\ref{alg:pc1:imex}, we assume that all off-diagonal entries of the stiffness matrix $A=(a_{\z, \z'})_{\z, \z' \in \NN_h}$ are nonpositive, i.e., it holds that
\begin{equation} \label{eq:angleCondition}
a_{\z, \z'} = \inner{\grad\phi_{\z'}}{\grad\phi_{\z}} \leq 0
\quad \text{for all } \z, \z' \in \NN_h \text{ with } \z \neq \z'.
\end{equation}
This requirement, usually referred to as \emph{angle condition}\footnote{The assumption~\eqref{eq:angleCondition} is usually referred to as angle condition, because in 3D it is satisfied, e.g., if all dihedral angles of all tetrahedra of $\TT_h$ are $\leq \pi/2$.}, ensures that the nodal projection $\w_h \mapsto \Interp\big[\w_h/\abs{\w_h}\big]$ does not increase the exchange energy of a discrete function, i.e., it holds that
\begin{equation} \label{eq:nodalProjectionEnergy}
\norm{\Grad\Interp\big[\w_h/\abs{\w_h}\big]} \leq \norm{\Grad\w_h}\,,
\end{equation}
for all $\w_h \in \SS^1(\TT_h)^3$ with $\abs{\w_h(\z)}\geq 1$ for all $\z\in\NN_h$; see~\cite[Lemma~3.2]{bartels2005}.
Moreover, we assume that the discrete operator $\ppi_h\colon \SS^1(\TT_h)^3 \to \L^2(\Omega)$ is stable in the sense that
\begin{equation} \label{eq:pi:stability}
\norm{\ppi_h(\w_h)} \leq C \norm{\w_h}
\quad \text{for all } \w_h \in \SS^1(\TT_h)^3\,,
\end{equation}
which is met in many practical situations; see~\cite{bffgpprs2014}.
Under these assumptions, there holds stability of Algorithm~\ref{alg:pc1:imex}.
\begin{theorem}\label{thm:pc1:imex:stability}
Let $\TT_h$ such that~\eqref{eq:nodalProjectionEnergy} holds true.
For input $\m_h^0 \in \Mh$, let $\left\{(\v_h^\ell,\m_h^{\ell+1})\right\}_{\ell= 0}^{L-1}$ be the output of Algorithm~\ref{alg:pc1:imex}.
Then, for all $J = 0, \dots, L-1$, there holds the stability estimate
\begin{equation}\label{eq:pc1:imex:stability}
\begin{split}
& \frac{\lex^2}{2} \norm{\Grad\m_h^J}^2
+ \alpha k \sum_{\ell=0}^{J-1} \norm{\v_h^\ell}^2
+ \lex^2 (\theta-1/2) k^2 \sum_{\ell=0}^{J-1} \norm{\Grad\v_h^\ell}^2 \\
& \quad \leq \frac{\lex^2}{2} \norm{\Grad\m_h^0}^2
+ k \sum_{\ell=0}^{J-1} \inner{\v_h^\ell}{\ppi_h(\m_h^\ell) + \f^\ell}.
\end{split}
\end{equation}
\end{theorem}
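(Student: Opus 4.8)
The plan is to carry out the energy argument that is standard for tangent plane type schemes (cf.~\cite{bffgpprs2014}), testing the predictor with the discrete velocity itself. First I would observe that, exactly as in Steps~2--3 of the proof of Theorem~\ref{thm:pc1:well_posedness}, the explicitly treated lower-order term $\P_h(\ppi_h(\m_h^\ell)+\f^\ell)$ does not affect the algebraic manipulations there (it is independent of $\v_h^\ell$ and enters symmetrically with $\lex^2\Lapl_h(\m_h^\ell+\theta k\v_h^\ell)$ inside the effective-field bracket), so the predictor~\eqref{eq:pc1:imex:predictor} is equivalent to the tangent-space problem: find $\v_h^\ell\in\Kh[\m_h^\ell]$ such that
\begin{equation*}
\alpha\inner[h]{\v_h^\ell}{\vvarphi_h}+\inner[h]{\m_h^\ell\times\v_h^\ell}{\vvarphi_h}=\inner[h]{\lex^2\Lapl_h(\m_h^\ell+\theta k\v_h^\ell)+\P_h(\ppi_h(\m_h^\ell)+\f^\ell)}{\vvarphi_h}\quad\text{for all }\vvarphi_h\in\Kh[\m_h^\ell],
\end{equation*}
where $\m_h^\ell\in\Mh$ for all $\ell$ follows from Lemma~\ref{lemma:pc1:Mh_Kh} by induction on $\ell$ starting from $\m_h^0\in\Mh$. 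Since $\v_h^\ell\in\Kh[\m_h^\ell]$ is an admissible test function, I would choose $\vvarphi_h=\v_h^\ell$ and use~\eqref{eq:cross:b} to kill the cross-product term, which leaves $\alpha\norm[h]{\v_h^\ell}^2=\lex^2\inner[h]{\Lapl_h(\m_h^\ell+\theta k\v_h^\ell)}{\v_h^\ell}+\inner[h]{\P_h(\ppi_h(\m_h^\ell)+\f^\ell)}{\v_h^\ell}$.

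Next I would translate the right-hand side into standard $\L^2(\Omega)$ quantities: the definition~\eqref{eq:discrete_laplacian} of $\Lapl_h$ gives $\lex^2\inner[h]{\Lapl_h(\m_h^\ell+\theta k\v_h^\ell)}{\v_h^\ell}=-\lex^2\inner{\Grad\m_h^\ell}{\Grad\v_h^\ell}-\lex^2\theta k\norm{\Grad\v_h^\ell}^2$, and the definition~\eqref{eq:L2-to-h-mapping} of $\P_h$ gives $\inner[h]{\P_h(\ppi_h(\m_h^\ell)+\f^\ell)}{\v_h^\ell}=\inner{\ppi_h(\m_h^\ell)+\f^\ell}{\v_h^\ell}$. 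For the mixed exchange term I would use the binomial identity $2k\inner{\Grad\m_h^\ell}{\Grad\v_h^\ell}=\norm{\Grad(\m_h^\ell+k\v_h^\ell)}^2-\norm{\Grad\m_h^\ell}^2-k^2\norm{\Grad\v_h^\ell}^2$. Multiplying by $k$ and rearranging then yields the per-step identity
\begin{equation*}
\alpha k\norm[h]{\v_h^\ell}^2+\lex^2(\theta-1/2)k^2\norm{\Grad\v_h^\ell}^2+\tfrac{\lex^2}{2}\norm{\Grad(\m_h^\ell+k\v_h^\ell)}^2=\tfrac{\lex^2}{2}\norm{\Grad\m_h^\ell}^2+k\inner{\v_h^\ell}{\ppi_h(\m_h^\ell)+\f^\ell}.
\end{equation*}

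To close the argument I would invoke two facts. First, as already shown inside the proof of Lemma~\ref{lemma:pc1:Mh_Kh}, $\v_h^\ell\in\Kh[\m_h^\ell]$ implies $|\m_h^\ell(\z)+k\v_h^\ell(\z)|\ge|\m_h^\ell(\z)|=1$ at every node $\z\in\NN_h$, so the assumed energy-monotonicity~\eqref{eq:nodalProjectionEnergy} of the nodal projection applies to $\m_h^\ell+k\v_h^\ell$ and gives $\norm{\Grad\m_h^{\ell+1}}\le\norm{\Grad(\m_h^\ell+k\v_h^\ell)}$; second, the norm equivalence~\eqref{eq:normEquivalence} gives $\norm{\v_h^\ell}\le\norm[h]{\v_h^\ell}$. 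Inserting both into the per-step identity turns it into the per-step inequality
\begin{equation*}
\alpha k\norm{\v_h^\ell}^2+\lex^2(\theta-1/2)k^2\norm{\Grad\v_h^\ell}^2+\tfrac{\lex^2}{2}\norm{\Grad\m_h^{\ell+1}}^2\le\tfrac{\lex^2}{2}\norm{\Grad\m_h^\ell}^2+k\inner{\v_h^\ell}{\ppi_h(\m_h^\ell)+\f^\ell},
\end{equation*}
and summing it over $\ell=0,\dots,J-1$ telescopes the exchange-energy terms and produces exactly~\eqref{eq:pc1:imex:stability}.

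I do not expect a genuine obstacle: this is the IMEX-stability template of~\cite{bffgpprs2014} transported to the mass-lumped setting, and once the tangent-space reformulation of the predictor is in place the remainder is a short, deterministic computation. The only point requiring a line of justification is the first step, namely that adding the $\v_h^\ell$-independent term $\P_h(\ppi_h(\m_h^\ell)+\f^\ell)$ to the effective field does not disturb the equivalence of~\eqref{eq:pc1:imex:predictor} with its tangent-space form, which is immediate from the structure of Steps~2--3 of Theorem~\ref{thm:pc1:well_posedness}. I would also note that~\eqref{eq:pc1:imex:stability} is stated so that it holds for every $\theta\in[0,1]$: only for $\theta\ge1/2$ does the term $\lex^2(\theta-1/2)k^2\sum\norm{\Grad\v_h^\ell}^2$ carry a favourable sign on the left-hand side, whereas for $\theta<1/2$ absorbing it---hence obtaining genuine, norm-controlling stability---would additionally require an inverse estimate and a CFL condition, which is outside the scope of this theorem.
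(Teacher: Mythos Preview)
Your proof is correct and reaches the same per-step identity and final estimate as the paper, but the route to the key identity $\alpha\norm[h]{\v_h^\ell}^2=\inner[h]{\v_h^\ell}{\heffhimex(\m_h^\ell,\v_h^\ell)}$ differs. You first invoke the equivalence of~\eqref{eq:pc1:imex:predictor} with its tangent-space reformulation (extending Steps~2--3 of Theorem~\ref{thm:pc1:well_posedness} to the IMEX effective field), and then test once with $\vvarphi_h=\v_h^\ell$; the skew term dies immediately and you are done. The paper instead works directly with the LL-form predictor~\eqref{eq:pc1:imex:predictor}: it tests with the three functions $\w_h=\v_h^\ell$, $\w_h=\heffhimex$, and $\w_h=\Interp(\m_h^\ell\times\heffhimex)$, obtains the triple~\eqref{eq:pc1:imex:tested:a}--\eqref{eq:pc1:imex:tested:c}, and combines them algebraically to the same identity. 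Your approach is shorter and leans on work already done in Theorem~\ref{thm:pc1:well_posedness}; the paper's approach is self-contained and makes the stability argument independent of that earlier equivalence proof. From the key identity onward---translation via~\eqref{eq:discrete_laplacian} and~\eqref{eq:L2-to-h-mapping}, the binomial expansion of $\norm{\Grad(\m_h^\ell+k\v_h^\ell)}^2$, application of~\eqref{eq:nodalProjectionEnergy}, the norm equivalence~\eqref{eq:normEquivalence}, and the telescoping sum---the two arguments are identical.
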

\begin{proof}
To abbreviate notation we define
\begin{align*}
\heffhimex(\m_h^\ell, \v_h^\ell) := \lex^2 \Lapl_h (\m_h^\ell + \theta k \v_h^\ell) + \P_h(\ppi_h(\m_h^\ell) + \f^\ell) \in \SS^1(\TT_h)^3\,.
\end{align*}
Testing~\eqref{eq:pc1:imex:predictor} with $\w_h = \v_h^\ell$, $\w_h = \heffhimex(\m_h^\ell, \v_h^\ell)$, and $\w_h = \Interp(\m_h^\ell \times \heffhimex(\m_h^\ell, \v_h^\ell))$, respectively, leads to
\begin{subequations}
\begin{align}
\label{eq:pc1:imex:tested:a}(1+ \alpha^2)\norm[h]{\v_h^\ell}^2 
&= \inner[h]{\m_h^\ell \times \v_h^\ell}{\heffhimex(\m_h^\ell, \v_h^\ell)}
+ \alpha\inner[h]{\v_h^\ell}{\heffhimex(\m_h^\ell, \v_h^\ell)},\\
\label{eq:pc1:imex:tested:b}\alpha\norm[h]{\m_h^\ell \times \heffhimex(\m_h^\ell, \v_h^\ell)}^2 
&= (1+ \alpha^2)\inner[h]{\v_h^\ell}{\heffhimex(\m_h^\ell, \v_h^\ell)}\,, \\
\label{eq:pc1:imex:tested:c}\norm[h]{\m_h^\ell \times \heffhimex(\m_h^\ell, \v_h^\ell)}^2 
&= (1+ \alpha^2)\inner[h]{\m_h^\ell \times \v_h^\ell}{\heffhimex(\m_h^\ell, \v_h^\ell)}\,,
\end{align}
\end{subequations}
where we used $\interp(|\m_h^\ell|^2) = 1$ and $\interp(\m_h^\ell\cdot\v_h^\ell) = 0$ in $\Omega$ together with the identities~\eqref{eq:cross:b}--\eqref{eq:cross:e}.
Combining~\eqref{eq:pc1:imex:tested:a}--\eqref{eq:pc1:imex:tested:c} gives
\begin{align*}
\alpha\norm[h]{\v_h^\ell}^2 = \inner[h]{\v_h^\ell}{\heffhimex(\m_h^\ell, \v_h^\ell)}\,.
\end{align*}
Plugging in the definition of $\heffhimex(\m_h^\ell, \v_h^\ell)$, we see
\begin{equation}\label{eq:pc1:imex:expand:heffhimex}
\lex^2 \inner{\Grad \v_h^\ell}{\Grad\m_h^\ell}
= - \alpha \norm[h]{\v_h^\ell}^2
- \lex^2 \theta k \norm{\Grad \v_h^\ell}^2
+ \inner{\v_h^\ell}{\ppi_h(\m_h^\ell) + \f^\ell}.
\end{equation}
Using the angle condition, we deduce that
\begin{equation*}
\begin{split}
& \frac{\lex^2}{2} \norm{\Grad\m_h^{\ell+1}}^2
- \frac{\lex^2}{2} \norm{\Grad\m_h^\ell}^2
\stackrel{\eqref{eq:nodalProjectionEnergy}}{\leq} \frac{\lex^2}{2} \norm{\Grad(\m_h^\ell + k \v_h^\ell)}^2 
- \frac{\lex^2}{2} \norm{\Grad\m_h^\ell}^2 \\
& \quad = \lex^2 k \inner{\Grad\m_h^\ell}{\Grad \v_h^\ell}
+ \frac{\lex^2}{2} k^2 \norm{\Grad\v_h^\ell}^2 \\
& \quad \stackrel{\eqref{eq:pc1:imex:expand:heffhimex}}{=} - \alpha k \norm[h]{\v_h^\ell}^2
- \lex^2 (\theta-1/2) k^2 \norm{\Grad\v_h^\ell}^2
+ k \inner{\v_h^\ell}{\ppi_h(\m_h^\ell) + \f^\ell}.
\end{split}
\end{equation*}
Summing over $\ell = 0, \dots, J-1$, we obtain that
\begin{equation*}
\begin{split}
& \frac{\lex^2}{2} \norm{\Grad\m_h^J}^2
+ \alpha k \sum_{\ell=0}^{J-1} \norm[h]{\v_h^\ell}^2
+ \lex^2 (\theta-1/2) k^2 \sum_{\ell=0}^{J-1} \norm{\Grad\v_h^\ell}^2 \\
& \quad \leq \frac{\lex^2}{2} \norm{\Grad\m_h^0}^2
+ k \sum_{\ell=0}^{J-1} \inner{\v_h^\ell}{\ppi_h(\m_h^\ell) + \f^\ell}.
\end{split}
\end{equation*}
Finally, the norm equivalence~\eqref{eq:normEquivalence} yields~\eqref{eq:pc1:imex:stability}.
\end{proof}
\begin{remark}\label{re:pc1:convergence}
{\rm(i)} The stability~\eqref{eq:pc1:imex:stability} is the very same estimate that one obtains for the first-order tangent plane scheme from~\cite{alouges2008a}; see, e.g., \cite[Lemma~3.5]{bffgpprs2014}.
Combining this estimate with the stability of $\ppi_h$ from~\eqref{eq:pi:stability}, one obtains boundedness of the discrete solutions, which allows to apply the standard compactness argument for parabolic PDEs to prove convergence; see, e.g.,~\cite[Section~3]{alouges2008a} or \cite[Section~3.5]{bffgpprs2014}.\\
{\rm(ii)} Consequently, for both Algorithm~\ref{alg:pc1:variational} and Algorithm~\ref{alg:pc1:imex}, one obtains a convergence result identical to~\cite[Theorem~2, Remark~1]{alouges2008a}.
In particular, as $h, k \to 0$, for $1/2 < \theta \le 1$ no coupling of the discretization parameters is necessary, while the CFL conditions $k = o(h)$ and $k = o(h^2)$ are proved to be sufficient for $\theta = 1/2$ and $0 \le \theta < 1/2$, respectively.\\
{\rm(iii)} We note that~\cite[Theorem~2.2]{kw2018} and its proof are slightly inaccurate and, in particular, the CFL condition $k = o(h)$ is missing for $\theta = 1/2$.
\end{remark}
We briefly comment on a projection-free modification of \texttt{PC1+IMEX}.
\begin{remark}
As pointed out in Remark~\ref{re:pc1:nature}, Algorithm~\ref{alg:pc1:imex} and the first-order tangent-plane scheme from~\cite{alouges2008a} coincide up to mass-lumped integration in the predictor~\eqref{eq:pc1:imex:predictor}.
Hence, an obvious modification of Algorithm~\ref{alg:pc1:imex} in the spirit of the projection-free tangent-plane scheme from~\cite[Algorithm~6]{ahpprs2014} is omitting the projection in the corrector~\eqref{eq:pc1:imex:corrector}, i.e., defining $\m_h^{\ell+1} := \m_h^\ell + k \v_h^\ell \in \SS^1(\TT_h)^3$.
For this projection-free variant of Algorithm~\ref{alg:pc1:imex}, at first glance, one could hope for the same desirable theoretical features as for the projection-free tangent plane scheme --- namely stability and weak convergence~\cite{ahpprs2014} without the angle condition~\eqref{eq:angleCondition} and even strong convergence~\cite{ft2017}, both at the price of a slight deterioration from nodewise unit-length $\m_h^\ell \not\in\Mh$.
In contrast to the projection-free tangent plane scheme, the projection-free variant of Algorithm~\ref{alg:pc1:imex} is unconditionally well-posed even for the limit case $\alpha = 0$; see Remark~\ref{re:proof:inexact}{\rm(iii)}--{\rm(iv)} below.
Further, it satisfies a discrete energy law, which, e.g., in the exchange-only case for $\theta = 1/2$ reads
\begin{align*}
\frac{\lex^2}{2} \norm{\Grad\m_h^J}^2
+ \frac{\alpha}{1+\alpha^2}\lex^4 k \sum_{\ell=0}^{J-1} \norm[h]{\m_h^\ell \times \Lapl_h(\m_h^\ell + (k/2) \v_h^\ell)}^2
= \frac{\lex^2}{2} \norm{\Grad\m_h^0}^2\,.
\end{align*}
However, due to the loss of nodewise unit-length $\m_h^\ell \not\in \Mh$, equivalence of the predictor of the projection-free version of Algorithm~\ref{alg:pc1:imex} and the discrete tangent space system~\eqref{eq:pc1:tpsh} in $\Kh[\m_h^\ell]$ does not hold anymore.
Consequently, the analysis for the projection-free tangent plane scheme from~\cite{ahpprs2014, ft2017} does not (directly) transfer, and a rigorous analysis of the projection-free version of Algorithm~\ref{alg:pc1:imex} remains open.
\end{remark}

\section{Second-order predictor-corrector scheme}
\label{sec:pc2}
In this section, we discuss the second-order scheme proposed in~\cite{kw2018}.
Our contribution is threefold:
In theory, well-posedness (for the predictor) of the scheme (which was left open in~\cite{kw2018}) follows already from our analysis in Section~\ref{sec:pc1:well_posedness}.
When accounting for the use of inexact (iterative) linear solvers, which is inevitable in practice, however, discrete unit-length $\m_h^\ell \in \Mh$ is lost and therefore a conceptually new analysis is required to guarantee well-posedness in practice.
We fill this fundamental gap in the analysis of~\cite{kw2018} for their second-order scheme, by proving unconditional well-posedness not only for the proposed predictor-corrector scheme, but also for its practical version incorporating inexact (iterative) linear solvers.
Again, we first consider the method for the case $\heff(\m) = \lex^2 \Lapl \m$.
The general case $\heff(\m) = \lex^2 \Lapl \m + \ppi(\m) + \f$ is treated in Section~\ref{sec:pc2:imex}, where we employ an explicit treatment of the (nonlocal) lower-order contributions to obtain a computationally superior IMEX version of the scheme, preserving experimental rates in time.
We numerically confirm the applicability and the formal second-order of the proposed IMEX scheme in Section~\ref{sec:numerics}.
Theoretical stability (and hence convergence) of the second-order scheme remains open (like in~\cite{kw2018}), but is experimentally investigated in a numerical study in Section~\ref{sec:stability_pc2}.

\subsection{Variational formulation}
The following algorithm restates~\cite[Algorithm~2]{kw2018} written in terms of the discrete functions $\m_h^\ell, \v_h^\ell, \m_h^{\ell+1} \in \SS^1(\TT_h)^3$.
In particular, the corrector~\eqref{eq:pc2:variational:corrector} of Algorithm~\ref{alg:pc2:variational} reformulates the $N$ equations in $\R^3$ of the corrector of~\cite[Algorithm~2]{kw2018} as an equivalent variational formulation for $\m_h^{\ell+1}$ in $\SS^1(\TT_h)^3$.
The predictor step coincides with step~(i) of Algorithm~\ref{alg:pc1:variational}, i.e., \eqref{eq:pc2:variational:predictor} coincides with~\eqref{eq:pc1:variational:predictor}.
As in Section~\ref{sec:pc1}, the parameter $\theta \in [0,1]$ modulates the `degree of implicitness' (in the predictor) of the scheme.
\begin{algorithm}[\texttt{PC2}, variational form] \label{alg:pc2:variational}
\textbf{Input:}
$\m_h^0 \in \Mh$. \\
\textbf{Loop:}
For all time-steps $\ell = 0, \dots, L-1$, iterate:
\begin{itemize}
\item[\rm(i)] Compute $\v_h^\ell \in \SS^1(\TT_h)^3$ such that, for all $\w_h \in \SS^1(\TT_h)^3$, it holds that
\begin{equation} \label{eq:pc2:variational:predictor}
\begin{split}
(1 + \alpha^2) \inner[h]{\v_h^\ell}{\w_h}
&= -\lex^2 \inner[h]{\m_h^\ell \times \Lapl_h (\m_h^\ell + \theta k \v_h^\ell)}{\w_h} \\
&\quad -\alpha\lex^2 \inner[h]{\m_h^\ell \times (\m_h^\ell \times \Lapl_h (\m_h^\ell + \theta k \v_h^\ell))}{\w_h}\,.
\end{split}
\end{equation}
\item[\rm(ii)] Compute $\m_h^{\ell+1} \in \Mh$ such that, for all $\w_h \in \SS^1(\TT_h)^3$, it holds that
\begin{align} \label{eq:pc2:variational:corrector}
(1 + \alpha^2) \inner[h]{d_t\m_h^{\ell+1}}{\w_h}
&= -\lex^2 \inner[h]{\m_h^{\ell+1/2} \times \Lapl_h (\m_h^\ell + (k/2) \v_h^\ell)}{\w_h} \\
\notag&\quad -\alpha\lex^2 \inner[h]{\m_h^{\ell+1/2} \times [(\m_h^\ell + (k/2) \v_h^\ell) \times \Lapl_h (\m_h^\ell + (k/2) \v_h^\ell)]}{\w_h}\,.
\end{align}
\end{itemize}
\textbf{Output:}
Sequence of discrete functions $\left\{(\v_h^\ell,\m_h^{\ell+1})\right\}_{\ell= 0}^{L-1}$.
\end{algorithm}
The corrector step of Algorithm~\ref{alg:pc1:variational}, which combines a linear first-order time-stepping with the nodal projection, is replaced by the linear system~\eqref{eq:pc2:variational:corrector}.
The 2D numerical results of \cite[Figure~3]{kw2018} indicate that the method is of second-order in time.
In Section~\ref{sec:convOrder}, we confirm this observation for a numerical example in 3D.

\subsection{Unconditional well-posedness, exact solver}
\label{sec:pc2:well_posedness}
In Lemma~\ref{lemma:pc2:Mh_Kh}, we first collect two basic properties of Algorithm~\ref{alg:pc2:variational}, which, for $\alpha > 0$, turn out to be sufficient to prove unconditional well-posedness of the algorithm in Theorem~\ref{thm:pc2:well_posedness}.
\begin{lemma}\label{lemma:pc2:Mh_Kh}
Let $\m_h^\ell \in \Mh$.
Suppose that the solutions $\v_h^\ell \in \SS^1(\TT_h)^3$ and $\m_h^{\ell+1} \in \SS^1(\TT_h)^3$ to~\eqref{eq:pc2:variational:predictor} and \eqref{eq:pc2:variational:corrector} exist, respectively.
Then, $\v_h^\ell \in \Kh[\m_h^\ell]$, and $\m_h^{\ell+1} \in \Mh$.
\end{lemma}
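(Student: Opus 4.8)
The plan is to mimic the vertexwise localization argument used in the proof of Lemma~\ref{lemma:pc1:Mh_Kh}. For the first assertion there is essentially nothing new to do: since the predictor $\eqref{eq:pc2:variational:predictor}$ is literally $\eqref{eq:pc1:variational:predictor}$, I would fix an arbitrary $\z\in\NN_h$, test with $\w_h:=\m_h^\ell(\z)\phi_{\z}\in\SS^1(\TT_h)^3$, collapse the mass-lumped pairings to the single vertex $\z$ via $\eqref{eq:mass-lumping}$, and use $\eqref{eq:cross:b}$ to see that the whole right-hand side vanishes; this gives $\m_h^\ell(\z)\cdot\v_h^\ell(\z)=0$, and since $\z$ is arbitrary, $\v_h^\ell\in\Kh[\m_h^\ell]$.

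For the second assertion the key is to choose the \emph{right} localized test function in the corrector $\eqref{eq:pc2:variational:corrector}$, namely $\w_h:=\m_h^{\ell+1/2}(\z)\phi_{\z}$ rather than $\m_h^\ell(\z)\phi_{\z}$ or $\m_h^{\ell+1}(\z)\phi_{\z}$. With this choice, $\eqref{eq:mass-lumping}$ reduces the left-hand side to $\beta_{\z}(1+\alpha^2)\,d_t\m_h^{\ell+1}(\z)\cdot\m_h^{\ell+1/2}(\z)$, which, by the definitions of $d_t(\cdot)$ and of $(\cdot)^{\ell+1/2}$, equals $\tfrac{\beta_{\z}(1+\alpha^2)}{2k}\big(\abs{\m_h^{\ell+1}(\z)}^2-\abs{\m_h^\ell(\z)}^2\big)$. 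On the right-hand side, both terms are of the form $\inner[h]{\m_h^{\ell+1/2}\times(\cdots)}{\w_h}$, so after collapsing to the vertex $\z$ they contain the factor $\big(\m_h^{\ell+1/2}(\z)\times(\cdots)\big)\cdot\m_h^{\ell+1/2}(\z)$, which is zero by $\eqref{eq:cross:b}$. Hence $\abs{\m_h^{\ell+1}(\z)}^2=\abs{\m_h^\ell(\z)}^2=1$ (using $\m_h^\ell\in\Mh$ and $k>0$), and since $\z$ was arbitrary, $\m_h^{\ell+1}\in\Mh$.

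I do not expect any genuine obstacle in this lemma: it is a purely algebraic vertexwise computation, and no restriction on $\alpha$ or on the discretization parameters enters. The only point that requires a moment of thought is the observation that, because $\m_h^{\ell+1/2}$ sits as the outermost factor of both cross products on the right-hand side of the corrector, testing against $\m_h^{\ell+1/2}(\z)\phi_{\z}$ simultaneously annihilates both right-hand side contributions and produces exactly the telescoping difference $\abs{\m_h^{\ell+1}(\z)}^2-\abs{\m_h^\ell(\z)}^2$ on the left; everything else is bookkeeping with the mass-lumping identity $\eqref{eq:mass-lumping}$.
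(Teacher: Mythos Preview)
Your proposal is correct and follows essentially the same argument as the paper: refer to Lemma~\ref{lemma:pc1:Mh_Kh} for $\v_h^\ell\in\Kh[\m_h^\ell]$, and for the corrector test~\eqref{eq:pc2:variational:corrector} with $\w_h=\m_h^{\ell+1/2}(\z)\phi_{\z}$ to obtain $\abs{\m_h^{\ell+1}(\z)}=\abs{\m_h^\ell(\z)}$ via~\eqref{eq:mass-lumping} and~\eqref{eq:cross:b}. The paper's proof is exactly this, with no additional ingredients.
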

\begin{proof}
The claim $\v_h^{\ell} \in \Kh[\m_h^\ell]$ follows as in the proof of Lemma~\ref{lemma:pc1:Mh_Kh}.
We show that $\m_h^{\ell} \in \Mh$ implies $\m_h^{\ell+1} \in \Mh$ due to the corrector system~\eqref{eq:pc2:variational:corrector}:
For arbitrary $\z \in \NN_h$, we choose $\w_h := \m_h^{\ell+1/2}(\z)\phi_{\z} \in \SS^1(\TT_h)^3$ in~\eqref{eq:pc2:variational:corrector} to see
\begin{align*}
\frac{(1 + \alpha^2) \beta_{\z}}{2k} \Big(|\m_h^{\ell+1}(\z)|^2 - |\m_h^{\ell}(\z)|^2 \Big)
\stackrel{\eqref{eq:mass-lumping}}{=} (1 + \alpha^2) \inner[h]{d_t\m_h^{\ell+1}}{\m_h^{\ell+1/2}(\z)\phi_{\z}}
\stackrel{\eqref{eq:pc2:variational:corrector}, \eqref{eq:cross:b}}{=} 0 \,.
\end{align*}
This shows that $|\m_h^{\ell+1}(\z)| = |\m_h^{\ell}(\z)|$ for all $\z \in \NN_h$.
Hence, $\m_h^\ell \in \Mh$ implies that $\m_h^{\ell+1} \in \Mh$.
The assumption $\m_h^0 \in \Mh$ concludes the proof.
\end{proof}
We show unconditional well-posedness of the corrector~\eqref{eq:pc2:variational:corrector}, while with Lemma~\ref{lemma:pc2:Mh_Kh} unconditional well-posedness of the predictor is inferred from our analysis in Section~\ref{sec:pc1:well_posedness}.
\begin{theorem}\label{thm:pc2:well_posedness}
Let $\alpha > 0$.
Then, Algorithm~\ref{alg:pc2:variational} is unconditionally well-posed for any input $\m_h^0 \in \Mh$, i.e., for all $\ell = 0, \dots, L-1$, the predictor~\eqref{eq:pc2:variational:predictor} admits a unique solution $\v_h^\ell \in \SS^1(\TT_h)^3$, and the corrector~\eqref{eq:pc2:variational:corrector} admits a unique solution $\m_h^{\ell+1} \in \Mh$.
\end{theorem}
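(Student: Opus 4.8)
The plan is to prove well-posedness of the predictor and the corrector separately, the former being an immediate consequence of Theorem~\ref{thm:pc1:well_posedness}. Indeed, the predictor~\eqref{eq:pc2:variational:predictor} coincides verbatim with the predictor~\eqref{eq:pc1:variational:predictor} of Algorithm~\ref{alg:pc1:variational}, so Theorem~\ref{thm:pc1:well_posedness} already yields a unique solution $\v_h^\ell \in \SS^1(\TT_h)^3$, and Lemma~\ref{lemma:pc2:Mh_Kh} guarantees $\v_h^\ell \in \Kh[\m_h^\ell]$. Since~\eqref{eq:pc2:variational:corrector} is a square linear system for $\m_h^{\ell+1} \in \SS^1(\TT_h)^3$ (the field $\v_h^\ell$ and $\m_h^\ell$ now being fixed data), it suffices to show that the associated homogeneous system has only the trivial solution, or equivalently, to exhibit a coercivity/injectivity property of the corresponding linear map.

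The main work is therefore the corrector. First I would rewrite~\eqref{eq:pc2:variational:corrector} using $d_t\m_h^{\ell+1} = (2/k)(\m_h^{\ell+1/2} - \m_h^\ell)$ as a linear equation whose unknown is $\m_h^{\ell+1/2}$, abbreviating the fixed vector $\g_h := \lex^2 \Lapl_h(\m_h^\ell + (k/2)\v_h^\ell) \in \SS^1(\TT_h)^3$ and the fixed vector $\a_h := \m_h^\ell + (k/2)\v_h^\ell$. The equation reads: find $\m_h^{\ell+1/2} \in \SS^1(\TT_h)^3$ such that
\begin{equation*}
\frac{2(1+\alpha^2)}{k} \inner[h]{\m_h^{\ell+1/2}}{\w_h}
+ \inner[h]{\m_h^{\ell+1/2} \times \g_h}{\w_h}
+ \alpha \inner[h]{\m_h^{\ell+1/2} \times (\a_h \times \g_h)}{\w_h}
= \frac{2(1+\alpha^2)}{k} \inner[h]{\m_h^\ell}{\w_h}
\end{equation*}
for all $\w_h \in \SS^1(\TT_h)^3$. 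Denote the bilinear form on the left by $b_h(\cdot,\cdot)$. Testing with $\w_h = \m_h^{\ell+1/2}$ and using~\eqref{eq:cross:b} (applied nodewise after invoking the mass-lumped representation~\eqref{eq:mass-lumping}) kills the cross-product terms entirely, because $(\b \times \c)\cdot\b = 0$; hence
\begin{equation*}
b_h(\m_h^{\ell+1/2}, \m_h^{\ell+1/2}) = \frac{2(1+\alpha^2)}{k} \norm[h]{\m_h^{\ell+1/2}}^2.
\end{equation*}
This gives unconditional coercivity of $b_h$ on $\SS^1(\TT_h)^3$ with respect to $\norm[h]{\cdot}$, with a constant independent of $h$, $k$, and the data. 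By the Lax--Milgram theorem (in finite dimensions, simple injectivity of a square system), the corrector~\eqref{eq:pc2:variational:corrector} admits a unique solution $\m_h^{\ell+1} \in \SS^1(\TT_h)^3$; Lemma~\ref{lemma:pc2:Mh_Kh} then upgrades this to $\m_h^{\ell+1} \in \Mh$.

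The subtle point I would watch is the bookkeeping in the cross-product term $\inner[h]{\m_h^{\ell+1/2} \times [(\m_h^\ell + (k/2)\v_h^\ell) \times \Lapl_h(\cdots)]}{\w_h}$: one must be careful that the mass-lumped inner product localizes at the vertices before applying~\eqref{eq:cross:b}, so that the factor multiplying $\m_h^{\ell+1/2}(\z)$ under the Euclidean dot product is genuinely perpendicular to $\m_h^{\ell+1/2}(\z)$ at each node $\z$. This is exactly the role played by the mass-lumping: it diagonalizes the quadratic form node-by-node, and within each node the term $\m_h^{\ell+1/2}(\z) \times (\cdots)$ dotted with $\m_h^{\ell+1/2}(\z)$ vanishes. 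No coupling of $h$ and $k$ and no assumption on $\m_h^\ell$ beyond $\m_h^\ell \in \Mh$ (actually not even that, for the coercivity step) is needed, which is precisely why the well-posedness is unconditional; I expect the only genuine obstacle to be notational, namely carrying the abbreviations cleanly through the proof.
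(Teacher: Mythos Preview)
Your proposal is correct and follows essentially the same route as the paper: both reduce the corrector to a linear problem in the unknown $\m_h^{\ell+1/2}$, observe that testing with this unknown annihilates the cross-product terms via~\eqref{eq:cross:b} (thanks to the nodewise structure of the mass-lumped inner product), and conclude coercivity with constant proportional to $(1+\alpha^2)$; the predictor is handled in both by invoking Theorem~\ref{thm:pc1:well_posedness}, and Lemma~\ref{lemma:pc2:Mh_Kh} closes the induction on $\m_h^{\ell+1}\in\Mh$. The only cosmetic difference is that the paper scales out the factor $2/k$ before writing the bilinear form.
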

\begin{proof}
By Lemma~\ref{lemma:pc2:Mh_Kh} it holds that $\m_h^\ell \in \Mh$ and $\v_h^\ell \in \Kh[\m_h^\ell]$ for all $\ell = 0, \dots, L - 1$.
Hence, as for the predictor of Algorithm~\ref{alg:pc1:variational}, the predictor system~\eqref{eq:pc2:variational:predictor} is equivalent to a coercive system in the discrete tangent space $\Kh[\m_h^\ell]$ with unique solution $\v_h^\ell \in \Kh[\m_h^\ell]$; see (the proof of) Theorem~\ref{thm:pc1:well_posedness}.
It remains to show well-posedness of the corrector~\eqref{eq:pc2:variational:corrector}:
We rewrite the problem in terms of the unknown $\eeta_h^\ell := \m_h^{\ell+1/2}$, which, by construction, satisfies that $\m_h^{\ell+1} = 2\eeta_h^\ell - \m_h^{\ell}$ and $d_t \m_h^{\ell+1} = 2(\eeta_h^\ell - \m_h^{\ell})/k$.
The corrector system~\eqref{eq:pc2:variational:corrector} then reads:
Find $\eeta_h^\ell \in \SS^1(\TT_h)^3$ such that
\begin{equation*}
\begin{split}
a_{\operatorname{cor}}[\m_h^\ell, \v_h^\ell]( \eeta_h^\ell, \w_h )
= (1 + \alpha^2) \inner[h]{\m_h^{\ell}}{\w_h},
\end{split}
\end{equation*}
where the bilinear form $a_{\operatorname{cor}}[\m_h^\ell, \v_h^\ell] \colon \SS^1(\TT_h)^3 \times \SS^1(\TT_h)^3 \to \R$ is defined by
\begin{equation*}
\begin{split}
a_{\operatorname{cor}}[\m_h^\ell, \v_h^\ell]( \eeta_h^\ell, \w_h )
& := 
(1 + \alpha^2) \inner[h]{\eeta_h^\ell}{\w_h}
+ \frac{\lex^2 k}{2} \, \inner[h]{\eeta_h^\ell \times \Lapl_h(\m_h^\ell + (k/2) \v_h^\ell)}{\w_h} \\
& \quad + \frac{\alpha \lex^2 k}{2} \, \inner[h]{\eeta_h^\ell \times [ (\m_h^\ell + (k/2) \v_h^\ell) \times \Lapl_h(\m_h^\ell + (k/2) \v_h^\ell) ]}{\w_h}.
\end{split}
\end{equation*}
As the bilinear form satisfies the ellipticity property
\begin{equation*}
a_{\operatorname{cor}}[\m_h^\ell, \v_h^\ell]( \w_h, \w_h)
= 
(1 + \alpha^2) \norm[h]{\w_h}^2
\quad
\text{for all } \w_h \in \SS^1(\TT_h)^3,
\end{equation*}
the problem is well-posed by the Lax--Milgram theorem.
Hence, \eqref{eq:pc2:variational:corrector} provides a unique solution $\m_h^{\ell+1} \in \SS^1(\TT_h)^3$.
Lemma~\ref{lemma:pc2:Mh_Kh} guarantees $\m_h^{\ell+1} \in \Mh$ concluding the proof.
\end{proof}
\begin{remark}\label{re:pc2:nature}
{\rm(i)} Algorithm~\ref{alg:pc2:variational} is a predictor-corrector scheme:
Both systems, for the predictor~\eqref{eq:pc2:variational:predictor} and for the corrector~\eqref{eq:pc2:variational:corrector}, respectively, are linear systems representing discrete mass-lumped variational versions of the LL form~\eqref{eq:llg:ll} of LLG; see also Remark~\ref{re:pc1:nature}{\rm(i)}.
First, treating the effective field implicitly in time, an approximate time derivative $\v_h^\ell \in \Kh[\m_h^\ell]$, the predictor, is computed.
In the second step (the effective field of) the predicted midpoint $\m_h^\ell + (k/2)\v_h^\ell \in \SS^1(\TT_h)^3$ is used to compute a corrected update $d_t\m_h^{\ell+1} \in \SS^1(\TT_h)^3$, guaranteeing conservation of discrete unit-length $\m_h^{\ell+1} := \m_h^\ell + k d_t\m_h^{\ell+1} \in \Mh$.\\
{\rm(ii)} In the proof of Theorem~\ref{thm:pc2:well_posedness}, note that the assumption $\alpha > 0$ is only exploited to apply Theorem~\ref{thm:pc1:well_posedness}.
Hence, analogously to Theorem~\ref{thm:pc1:well_posedness} (Remark~\ref{re:pc1:nature}{\rm(iv)}), also Theorem~\ref{thm:pc2:well_posedness} can be extended to the limit case $\alpha = 0$; see Theorem~\ref{thm:pc2:inexact:well_posedness} below.
\end{remark}

\subsection{Unconditional well-posedness, inexact solver}
\label{sec:pc2:inexact}
Considering the effect of numerical approximations, we extend the theoretical well-posedness result from the previous section to the practical case.

Well-posedness of the predictor step~{\rm(i)} of Algorithm~\ref{alg:pc2:variational} is guaranteed by Theorem~\ref{thm:pc1:well_posedness}:
There, under the crucial condition $\m_h^\ell \in \Mh$, computing $\v_h^\ell$ in the predictor step is shown to be equivalent to solving the system~\eqref{eq:pc1:tpsh} in the discrete tangent space $\Kh[\m_h^\ell]$, which is always well-posed for $\alpha > 0$.
While $\m_h^\ell \in \Mh$ is explicitly enforced in step~{\rm(ii)} of Algorithm~\ref{alg:pc1:variational}, in Algorithm~\ref{alg:pc2:variational} it follows only implicitly from the inherent length preservation guaranteed by the variational formulation~\eqref{eq:pc2:variational:corrector} solved in step~{\rm(ii)} together with $\m_h^{\ell-1} \in \Mh$ in the previous time-step; see the proof of Lemma~\ref{lemma:pc2:Mh_Kh}.
In practice however, linear systems are solved by inexact (iterative) numerical solvers, i.e., the coefficient vector of the unknown $\m_h^{\ell+1}$ solves the linear system of equations corresponding to~\eqref{eq:pc2:variational:corrector} only up to some accuracy $\varepsilon > 0$, commonly in the $\ell^2(\R^{3N})$-norm.
Consequently, for any $\z \in \NN_h$ there only holds $|\m_h^{\ell+1}(\z)| \approx |\m_h^{\ell}(\z)|$ with a small error depending on the discretization parameters $\varepsilon$ and $h$.
Moreover, the deviation from nodewise unit-length accumulates over the time-steps $\ell = 0, \dots, L-1$.
Consequently --- if recoverable at all --- one expects to require CFL-type couplings of the discretization parameters $k, h, \eps$ to rigorously argue (approximate) equivalence of the linear system in step~{\rm(i)} of Algorithm~\ref{alg:pc2:variational} and the well-posed system~\eqref{eq:pc1:tpsh} in the proof of Theorem~\ref{thm:pc1:well_posedness}.

To avoid these analytical difficulties, we take a different analytical approach:
The new analysis uses a space decomposition technique reformulating~\eqref{eq:pc2:variational:predictor} as an equivalent saddle-point problem, which subsequently is proved to be unconditionally well-posed and hence always provides a unique solution.
In particular, this does not require $\m_h^\ell \in \Mh$, but allows for arbitrary $\m_h^\ell \in \SS^1(\TT_h)^3 \supsetneqq \Mh$.
Additionally, the analysis applies to all $\alpha \ge 0$, extending well-posedness of Algorithm~\ref{alg:pc2:variational} to the Schr\"odinger map equation ($\alpha = 0$).
\begin{theorem}\label{thm:pc2:inexact:well_posedness}
Let $\alpha \ge 0$.
Then, Algorithm~\ref{alg:pc2:variational} is unconditionally well-posed for any input $\m_h^0 \in \SS^1(\TT_h)^3$, i.e., for all $\ell = 0, \dots, L-1$ and any $\m_h^\ell \in \SS^1(\TT_h)^3$, the predictor~\eqref{eq:pc2:variational:predictor} admits a unique solution $\v_h^\ell \in \SS^1(\TT_h)^3$, and the corrector~\eqref{eq:pc2:variational:corrector} admits a unique solution $\m_h^{\ell+1} \in \SS^1(\TT_h)^3$.
\end{theorem}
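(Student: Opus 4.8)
The plan is to handle the predictor and the corrector separately, and for the predictor to abandon the tangent-space reformulation of Theorem~\ref{thm:pc1:well_posedness} (which crucially needed $\m_h^\ell \in \Mh$) in favour of a decomposition of the full space $\SS^1(\TT_h)^3$ that does not see the unit-length constraint at all. For the corrector, inspection of the proof of Theorem~\ref{thm:pc2:well_posedness} shows that the ellipticity identity $a_{\operatorname{cor}}[\m_h^\ell,\v_h^\ell](\w_h,\w_h) = (1+\alpha^2)\norm[h]{\w_h}^2$ used only~\eqref{eq:cross:b} and never used $\m_h^\ell \in \Mh$; the same Lax--Milgram argument therefore applies verbatim for any $\m_h^\ell, \v_h^\ell \in \SS^1(\TT_h)^3$ and any $\alpha \ge 0$, giving a unique $\eeta_h^\ell$ and hence a unique $\m_h^{\ell+1} = 2\eeta_h^\ell - \m_h^\ell \in \SS^1(\TT_h)^3$. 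So the corrector is essentially free; the work is entirely in the predictor.

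For the predictor~\eqref{eq:pc2:variational:predictor} (which equals~\eqref{eq:pc1:variational:predictor}), since it is a square linear system it suffices to prove injectivity: if $\v_h^\ell$ solves the homogeneous problem (i.e.\ with $\Lapl_h \m_h^\ell$ dropped, keeping only the $\theta k \v_h^\ell$ terms on the right and the mass term on the left), then $\v_h^\ell = \0$. Testing with $\w_h = \m_h^\ell(\z)\phi_{\z}$ as in Lemma~\ref{lemma:pc1:Mh_Kh} still yields $\m_h^\ell(\z)\cdot\v_h^\ell(\z) = 0$ for every $\z$, so $\v_h^\ell$ lies in the "tangent space" $\Kh[\m_h^\ell]$ even when $\abs{\m_h^\ell(\z)}\neq 1$ — this identity used only~\eqref{eq:cross:b}. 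The idea is then to split a general test function: for $\w_h \in \SS^1(\TT_h)^3$ write $\w_h = \sum_{\z}\w_h(\z)\phi_{\z}$ and decompose each nodal value $\w_h(\z) \in \R^3$ orthogonally into its component parallel to $\m_h^\ell(\z)$ and its component in the plane perpendicular to $\m_h^\ell(\z)$; this gives a direct-sum decomposition $\SS^1(\TT_h)^3 = \mathcal{L}_h \oplus \Kh[\m_h^\ell]$ where $\mathcal{L}_h := \{\,\Interp(c\,\m_h^\ell) : c \in \SS^1(\TT_h)\,\}$ is the "nodally-longitudinal" subspace. Testing the homogeneous equation against $\w_h \in \mathcal{L}_h$ is automatically satisfied (both sides vanish, by~\eqref{eq:cross:b} applied nodewise), so the equation carries information only through test functions $\vvarphi_h \in \Kh[\m_h^\ell]$. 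Restricting to such $\vvarphi_h$ and using~\eqref{eq:cross:d}, \eqref{eq:cross:c} together with the nodal identity $\interp(\m_h^\ell\cdot\v_h^\ell)=0$ (but \emph{not} $\interp(\abs{\m_h^\ell}^2)=1$), the homogeneous predictor collapses — after the same manipulation as in Step~3 of the proof of Theorem~\ref{thm:pc1:well_posedness}, replacing the scalar $1$ by the mesh function $\interp(\abs{\m_h^\ell}^2) \ge 0$ where appropriate — to a bilinear equation of the schematic form
\begin{equation*}
\alpha\inner[h]{\v_h^\ell}{\vvarphi_h} + \inner[h]{\m_h^\ell \times \v_h^\ell}{\vvarphi_h} = \lex^2\theta k\inner[h]{\Lapl_h\v_h^\ell}{\vvarphi_h}\qquad\text{for all }\vvarphi_h\in\Kh[\m_h^\ell],
\end{equation*}
i.e.\ exactly~\eqref{eq:pc1:tpsh} with right-hand side zero. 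Choosing $\vvarphi_h = \v_h^\ell$ (legal since $\v_h^\ell \in \Kh[\m_h^\ell]$) and using~\eqref{eq:cross:b} to kill the cross-product term and~\eqref{eq:discrete_laplacian} to rewrite the Laplacian term gives $\alpha\norm[h]{\v_h^\ell}^2 + \lex^2\theta k\norm{\Grad\v_h^\ell}^2 = 0$. For $\alpha>0$ this forces $\v_h^\ell=\0$ immediately; for $\alpha=0$ (and $\theta>0$) it forces $\Grad\v_h^\ell=\0$, so $\v_h^\ell$ is a nodal-tangential \emph{constant} vector field, and a short separate argument (feeding such a $\v_h^\ell$ back into the original homogeneous equation, or noting $\theta=0$ trivially decouples the system into the invertible mass matrix) finishes it; the degenerate $\theta=0$ case is in fact immediate since then the predictor is just $(1+\alpha^2)\inner[h]{\v_h^\ell}{\w_h}=0$.

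The main obstacle is organising the space decomposition $\SS^1(\TT_h)^3 = \mathcal{L}_h \oplus \Kh[\m_h^\ell]$ correctly when $\m_h^\ell$ may vanish at some node — there the "longitudinal direction'' is not defined and one must argue that at such a node the equation~\eqref{eq:pc1:variational:predictor} already pins down $\v_h^\ell(\z)$ directly (the mass term survives while all right-hand-side terms carry a factor $\m_h^\ell(\z)=\0$), so those nodes can be excised and the decomposition applied on the remaining ones. Equivalently, and more cleanly, one bypasses the decomposition: prove directly that~\eqref{eq:pc1:variational:predictor} with zero data is equivalent to $\v_h^\ell\in\Kh[\m_h^\ell]$ plus the homogeneous~\eqref{eq:pc1:tpsh}, by repeating Steps~2 and~3 of the proof of Theorem~\ref{thm:pc1:well_posedness} and checking that every cancellation there used $\interp(\m_h^\ell\cdot\v_h^\ell)=0$ and $\interp(\m_h^\ell\cdot\vvarphi_h)=0$ — which still hold — together with~\eqref{eq:cross:c}, and that the places where $\interp(\abs{\m_h^\ell}^2)=1$ was invoked only ever multiplied terms that cancel against their partners or, on the homogeneous problem, a right-hand side that is already zero. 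I expect this bookkeeping — verifying that the sole use of nodewise unit-length in Theorem~\ref{thm:pc1:well_posedness} was cosmetic once the right-hand side is zero — to be the crux; everything after it (testing with $\v_h^\ell$, Lax--Milgram for the corrector, the $\alpha=0$ and $\theta=0$ special cases) is routine.
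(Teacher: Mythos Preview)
Your treatment of the corrector is correct and matches the paper: the ellipticity identity for $a_{\operatorname{cor}}$ uses only~\eqref{eq:cross:b} and applies for any $\m_h^\ell,\v_h^\ell\in\SS^1(\TT_h)^3$ and any $\alpha\ge0$.

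The predictor argument, however, has a genuine gap. The reduction of the homogeneous predictor to the homogeneous tangent-plane form~\eqref{eq:pc1:tpsh} is \emph{not} cosmetic in its use of $\interp(|\m_h^\ell|^2)=1$. If you carry out Step~2 of Theorem~\ref{thm:pc1:well_posedness} with $\w_h=\Interp(\alpha\vvarphi_h+\vvarphi_h\times\m_h^\ell)$ for general $\m_h^\ell$, the Laplacian contributions do \emph{not} collapse to $\lex^2\theta k\,\inner[h]{\Lapl_h\v_h^\ell}{\vvarphi_h}$; instead you obtain
\[
\alpha\inner[h]{\v_h^\ell}{\vvarphi_h}+\inner[h]{\m_h^\ell\times\v_h^\ell}{\vvarphi_h}
=\lex^2\theta k\,\inner[h]{|\m_h^\ell|^2\Lapl_h\v_h^\ell}{\vvarphi_h}
-\tfrac{\alpha\lex^2\theta k}{1+\alpha^2}\,\inner[h]{(1-|\m_h^\ell|^2)\,\m_h^\ell\times\Lapl_h\v_h^\ell}{\vvarphi_h}.
\]
Testing with $\vvarphi_h=\v_h^\ell$ kills the skew term $\inner[h]{\m_h^\ell\times\v_h^\ell}{\v_h^\ell}$, but the weighted Laplacian term becomes
$\inner[h]{|\m_h^\ell|^2\Lapl_h\v_h^\ell}{\v_h^\ell}=\inner[h]{\Lapl_h\v_h^\ell}{\Interp(|\m_h^\ell|^2\v_h^\ell)}=-\inner{\Grad\v_h^\ell}{\Grad\Interp(|\m_h^\ell|^2\v_h^\ell)}$,
which has no sign when $|\m_h^\ell|^2$ varies across nodes; the remaining cross term is likewise sign-indefinite. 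So the claimed identity $\alpha\norm[h]{\v_h^\ell}^2+\lex^2\theta k\norm{\Grad\v_h^\ell}^2=0$ does not follow, and injectivity is not established. Your hope that ``the sole use of nodewise unit-length in Theorem~\ref{thm:pc1:well_posedness} was cosmetic once the right-hand side is zero'' is exactly where the argument breaks.

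The paper circumvents this by abandoning the tangent-plane reformulation altogether. It uses the $\inner[h]{\cdot}{\cdot}$-orthogonal decomposition $\SS^1(\TT_h)^3=\SS_\ast^1(\TT_h)^3\oplus\R^3$ into zero-mean functions and constants, observes that $-\Lapl_h$ restricts to a bijection $-\RLapl_h$ on $\SS_\ast^1(\TT_h)^3$, and changes the unknown to $\q:=-\Lapl_h\v_h^\ell\in\SS_\ast^1(\TT_h)^3$ with the constant part $\llambda$ treated as a Lagrange multiplier. This recasts the predictor as a saddle-point problem whose principal bilinear form satisfies, on the kernel $\SS_\ast^1(\TT_h)^3$,
\[
a_{\operatorname{sp}}[\m_h^\ell](\q,\q)
=(1+\alpha^2)\,\bignorm{\Grad(-\RLapl_h)^{-1}\q}^2+\alpha\lex^2\theta k\,\norm[h]{\m_h^\ell\times\q}^2
\;\gtrsim\; h^2\norm[h]{\q}^2,
\]
where the coercivity comes from the \emph{first} term and is completely independent of $\m_h^\ell$ and of $\alpha\ge0$. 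Brezzi's theorem then yields well-posedness. The key idea---replacing the unknown by its discrete Laplacian so that the troublesome $\m_h^\ell\times\Lapl_h(\cdot)$ terms become skew or nonnegative rather than sign-indefinite---is precisely what your nodal longitudinal/tangential decomposition cannot supply.
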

\begin{proof}
For arbitrary $\m_h^\ell \in \SS^1(\TT_h)^3$ well-posedness of the corrector~\eqref{eq:pc2:variational:corrector} is guaranteed by the proof of Theorem~\ref{thm:pc2:well_posedness}, as it does not require $\m_h^\ell \in \Mh$.
Using a space decomposition technique, we show unconditional well-posedness of the predictor system~\eqref{eq:pc2:variational:predictor} for any $\m_h^\ell \in \SS^1(\TT_h)^3$ --- in particular for $\m_h^\ell \in \SS^1(\TT_h)^3$ not necessarily belonging to $\Mh$ --- in five steps:
\begin{itemize}
\item \textbf{Step~0:} Some notation.
\end{itemize}
Throughout, for an operator $A \colon X \to Y$ between two Hilbert spaces, we write $\Range(A) \subseteq Y$ for its range, and $\Kernel(A) \subseteq X$ for its kernel.
We consider the (negative) discrete Laplace operator~\eqref{eq:discrete_laplacian} restricted to $\SS^1(\TT_h)^3 \subset \H^1(\Omega)$, which will be denoted by the same symbol $-\Lapl_h \colon \SS^1(\TT_h)^3 \to \SS^1(\TT_h)^3$.
Further, we identify a $3$-vector with the corresponding constant vector-valued grid function, i.e., $\R^3 \subset \big(\SS^1(\TT_h)^3, \inner[h]{\cdot}{\cdot}\big)$.
For $S \subset \SS^1(\TT_h)^3$ a subspace we denote by $\II_S$ the identity on $S$.
\begin{itemize}
\item \textbf{Step~1:} Orthogonal decomposition $\SS^1(\TT_h)^3 = \Range(\PPast) \oplus \Kernel(\PPast)$.
\end{itemize}
Define the operator $\PPast \colon \SS^1(\TT_h)^3 \to \SS^1(\TT_h)^3$ for all $\w_h \in \SS^1(\TT_h)^3$ via
\begin{align*}
(\PPast\w_h)_j = (\w_h)_j - \meas(\Omega)^{-1}\inner[h]{\w_h}{\e_j} \in \SS^1(\TT_h) \qquad\text{for all}\quad j = 1, 2, 3\,.
\end{align*}
Clearly, $\PPast$ is the $\inner[h]{\cdot}{\cdot}$-orthogonal projector onto
\begin{align*}
\Range(\PPast) = \SS_\ast^1(\TT_h)^3 := \{\w_h \in \SS^1(\TT_h)^3 \colon \inner[h]{\w_h}{\e_j} = 0 \text{ for all } j = 1, 2, 3\}\,,
\end{align*} 
the subset of $\SS^1(\TT_h)^3$ consisting of the vector-valued grid functions which have zero mean in each component.
Due to self-adjointness, $\PPast$ provides the orthogonal decomposition
\begin{align*}
\SS^1(\TT_h)^3 = \Range(\PPast) \oplus \Kernel(\PPast) = \SS_\ast^1(\TT_h)^3 \oplus \R^3 \,.
\end{align*}
With respect to this decomposition, we rewrite the unknown $\v_h^\ell \in \SS^1(\TT_h)^3$ as the orthogonal sum 
\begin{align}\label{eq:v:decomp:PPast}
\v_h^\ell = \PPast\v_h^\ell \oplus (\II_{\SS^1(\TT_h)^3} - \PPast)\v_h^\ell =: \v_\ast \oplus \overline{\v}\,,
\end{align}
with unique $\v_\ast \in \Range(\PPast) = \SS_\ast^1(\TT_h)^3$ and $\overline{\v} \in \Kernel(\PPast) = \R^3$.
Note, that $\overline{\v} \in \R^3$ is the vector-valued mean of $\v_h^\ell$, i.e., $\inner[h]{\overline{\v}}{\e_j} = \inner[h]{\v_h^\ell}{\e_j}$ for all components $j=1, 2, 3$.
\begin{itemize}
\item \textbf{Step~2:} Reduced operator $-\RLapl_h \colon \SS_\ast^1(\TT_h)^3 \to \SS_\ast^1(\TT_h)^3$.
\end{itemize}
The discrete Laplacian $-\Lapl_h \colon \SS^1(\TT_h)^3 \to \SS^1(\TT_h)^3$ is linear, self-adjoint and by definition~\eqref{eq:discrete_laplacian} has the kernel $\NN(-\Lapl_h) = \R^3 \subset \SS^1(\TT_h)^3$.
Hence, there holds the orthogonal decomposition
\begin{align}\label{eq:decomposition:laplace}
\SS^1(\TT_h)^3 
= \Range(-\Lapl_h) \oplus \Kernel(-\Lapl_h)
= \Kernel(-\Lapl_h)^\bot \oplus \Kernel(-\Lapl_h) 
= \SS_\ast^1(\TT_h)^3 \oplus \R^3 \,.
\end{align}
Consequently, the reduced operator $-\Lapl_h|_{\SS_\ast^1(\TT_h)^3} =: -\RLapl_h \colon \SS_\ast^1(\TT_h)^3 \to \SS_\ast^1(\TT_h)^3$ is linear, self-adjoint, and bijective.
Moreover, it provides a well-defined inverse denoted by $(-\RLapl_h)^{-1} \colon \SS_\ast^1(\TT_h)^3 \to \SS_\ast^1(\TT_h)^3$ with the same attributes.
We point out the identities
\begin{align}\label{eq:RLapl:identities}
(-\RLapl_h)^{-1} \circ (-\Lapl_h) = \PPast \quad\text{ and }\quad
-\Lapl_h \circ (-\RLapl_h)^{-1} = \PPast|_{\SS_\ast^1(\TT_h)^3} = \II_{\SS_\ast^1(\TT_h)^3}\,,
\end{align}
which follow from the orthogonal decomposition~\eqref{eq:decomposition:laplace}.
\begin{itemize}
\item \textbf{Step~3:} Equivalent saddle point formulation.
\end{itemize}
With the unknowns $\q := -\Lapl_h \v_\ast \in \SS_\ast^1(\TT_h)^3$ and $\llambda := \overline{\v} \in \R^3$ from~\eqref{eq:v:decomp:PPast}, we induce the representation $\v_h^\ell = (-\RLapl_h)^{-1}\q \oplus \llambda$.
Plugging this identity into~\eqref{eq:pc2:variational:predictor}, we rewrite the predictor as equivalent saddle point problem:
Find $(\q, \llambda) \in \SS^1(\TT_h)^3 \times \R^3$, such that for all $(\w, \mmu) \in \SS^1(\TT_h)^3 \times \R^3$ it holds that
\begin{subequations}
\begin{align}
\label{eq:saddle_point:llg} a_{\operatorname{sp}}[\m_h^\ell](\q, \w) + b_{\operatorname{sp}}(\w, \llambda) &= F_{\operatorname{sp}}[\m_h^\ell](\w)\,, \\
\label{eq:saddle_point:zeromean} b_{\operatorname{sp}}(\q, \mmu) &= 0 \,,
\end{align}
\end{subequations}
with the (bi-)linear forms $a_{\operatorname{sp}}[\m_h^\ell] \colon \SS^1(\TT_h)^3 \times \SS^1(\TT_h)^3 \to \R$, $b \colon \SS^1(\TT_h)^3 \times \R^3 \to \R$, and $F_{\operatorname{sp}}[\m_h^\ell] \colon \SS^1(\TT_h)^3 \to \R$ given by
\begin{align*}
a_{\operatorname{sp}}[\m_h^\ell](\q, \w) 
&:= (1 + \alpha^2) \inner[h]{(-\RLapl_h)^{-1}\PPast\q}{\w} \\
&\quad -\lex^2 \theta k \inner[h]{\m_h^\ell \times \q}{\w} 
- \alpha\lex^2\theta k \inner[h]{\m_h^\ell \times (\m_h^\ell \times \q)}{\w}\,, \\
b_{\operatorname{sp}}(\w, \llambda) &:= (1 + \alpha^2) \inner[h]{\llambda}{\w} \,, \\
F_{\operatorname{sp}}[\m_h^\ell](\w) 
&:= -\lex^2 \inner[h]{\m_h^\ell \times \Lapl_h\m_h^\ell}{\w} 
- \alpha\lex^2 \inner[h]{\m_h^\ell \times (\m_h^\ell \times \Lapl_h\m_h^\ell)}{\w}\,.
\end{align*}
The equivalence of~\eqref{eq:saddle_point:llg}--\eqref{eq:saddle_point:zeromean} to~\eqref{eq:pc2:variational:predictor} follows from $\llambda \in \Kernel(-\Lapl_h)$ and~\eqref{eq:RLapl:identities}.
We use the operator $(-\RLapl_h)^{-1} \circ \PPast$ rather than $(-\RLapl_h)^{-1}$, so that the bilinear form $a_{\operatorname{sp}}[\m_h^\ell]$ is well-defined on $\SS^1(\TT_h)^3 \supsetneqq \SS_\ast^1(\TT_h)^3$.
The second equation~\eqref{eq:saddle_point:zeromean} ensures $\q \in \SS_\ast^1(\TT_h)^3$, which is not enforced explicitly.
\begin{itemize}
\item \textbf{Step~4:} The bilinear form $a_{\operatorname{sp}}[\m_h^\ell]$ is coercive on the kernel of $b_{\operatorname{sp}}$.
\end{itemize}
We aim to apply the Brezzi theory for saddle point problems; see, e.g.,~\cite[Section~4.2]{bbf2013}.
Hence, we require coercivity of the bilinear form $a_{\operatorname{sp}}[\m_h^\ell] \colon \SS^1(\TT_h)^3 \times \SS^1(\TT_h)^3 \to \R$ on
\begin{align*}
\bigcap_{\llambda \in \R^3} \Kernel\big(b_{\operatorname{sp}}(\cdot, \llambda)\big)
&= \bigcap_{\llambda \in \R^3} \{\w \in \SS^1(\TT_h)^3 \colon \inner[h]{\llambda}{\w} = 0\} \\
&= \bigcap_{j = 1, 2, 3} \{\w \in \SS^1(\TT_h)^3 \colon \inner[h]{\e_j}{\w} = 0\}
= \SS_\ast^1(\TT_h)^3 \,.
\end{align*}
For any $\q \in \SS_\ast^1(\TT_h)^3$, we compute
\begin{align*}
a_{\operatorname{sp}}[\m_h^\ell](\q, \q) 
&\stackrel{\eqref{eq:cross:b}}{=} (1 + \alpha^2) \inner[h]{(-\RLapl_h)^{-1}\PPast\q}{\q} - \alpha\lex^2\theta k \inner[h]{\m_h^\ell \times (\m_h^\ell \times \q)}{\q}\,, \\
&\stackrel{\eqref{eq:RLapl:identities}, \eqref{eq:cross:d}}{=} (1 + \alpha^2) \inner[h]{(-\RLapl_h)^{-1}\q}{-\Lapl_h(-\RLapl_h)^{-1}\q} + \alpha\lex^2\theta k\norm[h]{\m_h^\ell\times\q}^2 \\
&\stackrel{\eqref{eq:discrete_laplacian}}{=} (1 + \alpha^2) \bignorm[\L^2(\Omega)]{\Grad(-\RLapl_h)^{-1}\q}^2 + \alpha\lex^2\theta k\norm[h]{\m_h^\ell\times\q}^2 \\
&\gtrsim h^2\norm[h]{\q}^2 + \alpha\lex^2\theta k\norm[h]{\m_h^\ell\times\q}^2
\ge h^2\norm[h]{\q}^2 \,,
\end{align*}
where the second to last estimate is an inverse estimate on $\SS_\ast^1(\TT_h)^3$ derived from the classical inverse estimate on $\SS^1(\TT_h)^3$ via
\begin{align*}
\norm[h]{\q}^2
&= \inner[h]{\q}{\q}
\stackrel{\eqref{eq:RLapl:identities}}{=} \inner[h]{\q}{-\Lapl_h(-\RLapl_h)^{-1}\q}
\stackrel{\eqref{eq:discrete_laplacian}}{=} \inner[\L^2(\Omega)]{\Grad\q}{\Grad(-\RLapl_h)^{-1}\q} \\
&\le \norm[\L^2(\Omega)]{\Grad\q}\bignorm[\L^2(\Omega)]{\Grad(-\RLapl_h)^{-1}\q}
\lesssim h^{-1}\norm[h]{\q}\bignorm[\L^2(\Omega)]{\Grad(-\RLapl_h)^{-1}\q}\,.
\end{align*}
Hence, $a_{\operatorname{sp}}[\m_h^\ell]$ is coercive on $\bigcap_{\llambda \in \R^3}\NN(b_{\operatorname{sp}}(\cdot, \llambda)) = \SS_\ast^1(\TT_h)^3$ with ellipticity constant proportional to $h^2 > 0$.
\begin{itemize}
\item \textbf{Step~5:} Unique solvability and reconstruction of $\v_h^\ell$.
\end{itemize}
Clearly, $b_{\operatorname{sp}} \colon \SS^1(\TT_h)^3 \times \R^3 \to \R$ satisfies the inf-sup condition with constant $(1 + \alpha^2) > 0$.
Now unique solvability of the saddle point formulation~\eqref{eq:saddle_point:llg}--\eqref{eq:saddle_point:zeromean} follows from the Brezzi theorem~\cite[Theorem~4.2.1]{bbf2013}.
Ultimately, with $(\q, \llambda) \in \SS_\ast^1(\TT_h)^3 \times \R^3$ denoting the unique solution of~\eqref{eq:saddle_point:llg}--\eqref{eq:saddle_point:zeromean}, the original unknown solution to~\eqref{eq:pc2:variational:predictor} is reconstructed via $\v_h^\ell = (-\RLapl_h)^{-1}\q \oplus \llambda \in \SS_\ast^1(\TT_h)^3 \oplus \R^3 = \SS^1(\TT_h)^3$ and is therefore also unique.
\end{proof}
\begin{remark}\label{re:proof:inexact}
{\rm(i)} In the third step of the proof of Theorem~\ref{thm:pc2:inexact:well_posedness}, we introduced the unknown $\q := -\Lapl_h\v_\ast \in \SS_\ast^1(\TT_h)^3$.
This idea is inspired by~\cite[Section~2.3]{xgwzc2020}, where the authors subsequently use the Browder--Minty lemma for monotone operators to prove well-posedness of their proposed finite difference LLG integrator based on the second-order backward differentiation formula. \\
{\rm(ii)} In Step~4 of the proof of Theorem~\ref{thm:pc2:inexact:well_posedness}, as the new unknown $\q = -\Lapl_h\v_h^\ell$ comprises second-order derivatives of the original unknown, it is not surprising that the ellipticity constant for the bilinear form $a_{\operatorname{sp}}[\m_h^\ell]$ scales proportionally to $h^2 > 0$.\\
{\rm(iii)} Since $\Mh \subset \SS^1(\TT_h)^3$ and the predictors of Algorithm~\ref{alg:pc1:variational} and Algorithm~\ref{alg:pc2:variational} coincide, the proof of Theorem~\ref{thm:pc2:inexact:well_posedness} is not only an alternative proof to Theorem~\ref{thm:pc2:well_posedness}, but also to Theorem~\ref{thm:pc1:well_posedness}, which additionally extends both theorems to the critical value $\alpha = 0$.\\
{\rm(iv)} Consequently, Algorithm~\ref{alg:pc1:variational} is not only a mass-lumped version of the tangent plane scheme~\cite{alouges2008a}, but additionally it is well-posed for the Schr\"odinger map equation ($\alpha = 0$).\\
{\rm(v)} Even though the predictor of Algorithm~\ref{alg:pc1:variational} written in the form~\eqref{eq:pc1:tpsh} coincides with the predictor of the tangent plane scheme up to the used integration rule, well-posedness of the tangent plane scheme for the limit case $\alpha = 0$ remains open.
Indeed, the proof of Theorem~\ref{thm:pc2:inexact:well_posedness} relies heavily on mass-lumped integration, and we did not succeed to transfer the proof to exact integration used in the original tangent plane scheme.
\end{remark}

\subsection{Including the lower-order contributions} \label{sec:pc2:imex}
We consider the case when the effective field comprises linear lower-order energy contributions $\ppi(\m)$ such as, in particular, the nonlocal stray field $\hstray$, i.e., $\heff(\m) = \lex^2 \, \Lapl \m + \ppi(\m) + \f$.
Then the predictor step of the original second-order integrator proposed in~\cite[Algorithm~2]{kw2018} is identical to~\eqref{eq:pc1:implicit_pi}, i.e., lower-order terms are treated implicitly in time.
Due to the nonlocality of the stray field this is unattractive in practice as described in Section~\ref{sec:pc1:imex}. 
Hence, analogously to Section~\ref{sec:pc1:imex}, we aim to treat the lower-order terms $\ppi(\m)$ explicitly in time.
However, to avoid spoiling the scheme's potential second-order accuracy in time, which was observed experimentally in~\cite{kw2018}, the modification is slightly more involved:

In Section~\ref{sec:pc1:imex} an error of order $\mathcal{O}(k)$ is introduced to the system~\eqref{eq:pc1:implicit_pi} by approximating $\ppi_h(\m_h^\ell + \theta k\v_h^\ell) \approx \ppi_h(\m_h^\ell)$.
Since Algorithm~\ref{alg:pc1:variational} is a first-order scheme, this modification did not deteriorate the order of convergence of the algorithm.

To preserve the potential second-order of Algorithm~\ref{alg:pc2:variational}, we use a higher-order approximation to $\ppi(\m_h^\ell + \theta k\v_h^\ell)$:
Recall, that $\ppi$ is a linear operator and that $\v_h^\ell$ is an approximation of $\partial_t\m(t_\ell)$.
Motivated by the Taylor expansion $\m(t_\ell) = \m(t_{\ell-1}) + k\partial_t\m(t_\ell) + \mathcal{O}(k^2)$, and hence $\m(t_\ell) + \theta k\partial_t\m(t_\ell) = (1 + \theta)\m(t_\ell) - \theta\m(t_{\ell-1}) + \mathcal{O}(k^2)$, we introduce a second-order error $\mathcal{O}(k^2)$ to the system~\eqref{eq:pc1:implicit_pi} via the approximation
\begin{equation*}
\ppi_h(\m_h^\ell + \theta k\v_h^\ell) \approx (1 + \theta)\ppi_h(\m_h^\ell) - \theta\ppi_h(\m_h^{\ell-1})\,.
\end{equation*}
Only the leading-order exchange contribution is treated implicitly in time, while the lower-order contributions are treated explicitly.
Due to the higher-order approximation of $\ppi_h(\v_h^\ell)$, this does not spoil the observed second-order of the scheme and it is computationally much more attractive.
To sum up, we consider the following algorithm.
\begin{algorithm}[\texttt{PC2+IMEX}] \label{alg:pc2:imex}
\textbf{Input:}
$\m_h^0 \in \Mh$. \\
\textbf{Preprocessing:} Compute $\m_h^1 \in \Mh$, e.g., by Algorithm~\ref{alg:pc2:variational}.\\
\textbf{Loop:}
For all time-steps $\ell = 1, \dots, L-1$, iterate:
\begin{itemize}
\item[\rm(i)] Compute $\P_h((1+\theta)\ppi_h(\m_h^\ell) - \theta\ppi_h(\m_h^{\ell-1}) + \f^{\ell + \theta}) \in \SS^1(\TT_h)^3$.
\item[\rm(ii)] Compute $\v_h^\ell \in \SS^1(\TT_h)^3$ such that, for all $\w_h \in \SS^1(\TT_h)^3$, it holds that
\begin{align} \label{eq:pc2:imex:predictor}
&(1 + \alpha^2) \inner[h]{\v_h^\ell}{\w_h} \\
\notag&\;= -\inner[h]{\m_h^\ell \times [\lex^2\Lapl_h (\m_h^\ell + \theta k \v_h^\ell) + \P_h((1+\theta)\ppi_h(\m_h^\ell) - \theta\ppi_h(\m_h^{\ell-1}) + \f^{\ell + \theta})]}{\w_h} \\
\notag&\;\,-\alpha\inner[h]{\m_h^\ell \times (\m_h^\ell \times [\lex^2\Lapl_h (\m_h^\ell + \theta k \v_h^\ell) + \P_h((1+\theta)\ppi_h(\m_h^\ell) - \theta\ppi_h(\m_h^{\ell-1}) + \f^{\ell + \theta})])}{\w_h}\,.
\end{align}
\item[\rm(iii)] Compute $\m_h^{\ell+1} \in \Mh$ such that, for all $\w_h \in \SS^1(\TT_h)^3$, it holds that
\begin{equation*}
\begin{split}
& (1 + \alpha^2) \inner[h]{d_t\m_h^{\ell+1}}{\w_h} \\
&\;\;= - \inner[h]{\m_h^{\ell+1/2} \times [\lex^2\Lapl_h(\m_h^\ell + (k/2) \v_h^\ell) + \P_h(\ppi_h(\m_h^\ell + (k/2)\v_h^\ell) + \f^{\ell + 1/2})]}{\w_h} \\
&\quad\;\; - \alpha \inner[h]{\m_h^{\ell+1/2} \times \big((\m_h^\ell + (k/2) \v_h^\ell) \\
&\quad\;\;\;\; \times [\lex^2\Lapl_h(\m_h^\ell + (k/2) \v_h^\ell) + \P_h(\ppi_h(\m_h^\ell + (k/2)\v_h^\ell) + \f^{\ell + 1/2})]\big)}{\w_h}.
\end{split}
\end{equation*}
\end{itemize}
\textbf{Output:}
Sequence of discrete functions $\left\{(\v_h^\ell,\m_h^{\ell+1})\right\}_{\ell= 0}^{L-1}$.
\end{algorithm}
\begin{remark}
{\rm(i)} In the preprocessing step of Algorithm~\ref{alg:pc2:imex} also other integrators may be used to compute $\m_h^1 \in \Mh$.
As long as the approximation $\m_h^1$ is second-order accurate, the potential second-order accuracy of Algorithm~\ref{alg:pc2:variational} is preserved by Algorithm~\ref{alg:pc2:imex}.
(Note that first-order accurate integrators usually only introduce a quadratic error per time-step.)\\
{\rm(ii)} Algorithm~\ref{alg:pc2:imex} is also well-posed in practice, when effects of inexact (iterative) solvers are accounted for, i.e.,~\eqref{eq:pc2:imex:predictor} is unconditionally well-posed for arbitrary $\m_h^\ell \in \SS^1(\TT_h)^3 \supsetneqq \Mh$.
As lower-order terms are treated explicitly in time, proving well-posedness follows the lines of the proof of Theorem~\ref{thm:pc2:inexact:well_posedness} with adjusted linear form $F_{\operatorname{sp}}[\m_h^\ell] \rightsquigarrow F_{\operatorname{imex}}[\m_h^\ell, \m_h^{\ell-1}]$.
\end{remark}

\section{Numerical experiments} 
\label{sec:numerics}
This section provides some numerical experiments for Algorithm~\ref{alg:pc1:variational} and Algorithm~\ref{alg:pc2:variational} from~\cite{kw2018}, as well as their respective IMEX versions proposed in this work, namely Algorithm~\ref{alg:pc1:imex} and Algorithm~\ref{alg:pc2:imex}, respectively.
In Section~\ref{sec:mumag4} we verify the correctness of the proposed integrators (\texttt{PC1+IMEX} and \texttt{PC2+IMEX}) on the benchmark problem $\mu$MAG~\#4 from \cite{MUMAG}.
In Section~\ref{sec:convOrder} the experimental rates of Algorithm~\ref{alg:pc1:variational} (\texttt{PC1}) and Algorithm~\ref{alg:pc2:variational} (\texttt{PC2}) reported in~\cite{kw2018} are confirmed.
Moreover, the experiment shows that lower-order terms can appropriately be treated explicitly in time by Algorithm~\ref{alg:pc1:imex} (\texttt{PC1+IMEX}) and Algorithm~\ref{alg:pc2:imex} (\texttt{PC2+IMEX}), respectively, without spoiling the rate of convergence.

All computations have been performed with our micromagnetic software module Commics~\cite{commics2020},
based on the open-source finite element library Netgen/NGSolve~\cite{ngsolve}.
In Commics, the stray field $\hstray$ is computed via the hybrid FEM-BEM approach from~\cite{fk1990}.
We note that meshes generated by Netgen in general do not satisfy the angle condition~\eqref{eq:angleCondition}.
All experiments were repeated on structured meshes satisfying the angle condition leading to the same results (not displayed).

\subsection{\texorpdfstring{$\mu$}{mu}MAG standard problem~\#4}
\label{sec:mumag4}
We verify the practical applicability of the proposed integrators \texttt{PC1+IMEX} and \texttt{PC2+IMEX} (we choose $\theta=1/2$) by computing a physically relevant example.
To this end, we consider $\mu$MAG standard problem \#4~\cite{MUMAG}, which simulates the switching of the magnetization in a thin permalloy layer.

The objective is the simulation of the magnetization dynamics in a thin permalloy film of dimensions $\SI{500}{\nm}\times\SI{125}{\nm}\times\SI{3}{\nm}$ under the influence of a constant applied external field.
The involved physical constants and material parameters are the gyromagnetic ratio $\gamma_0 =$ \SI{2.211e5}{\meter\per\coulomb}, the permeability of vacuum $\mu_0 = 4 \pi \, \cdot \!\!$ \SI{e-7}{\newton/\ampere\squared}, the saturation magnetization $M_{\mathrm{s}} =$ \SI{8.0e5}{\ampere\per\meter}, the exchange stiffness constant $A =$ \SI{1.3e-11}{\joule\per\meter}, and the Gilbert damping constant $\alpha = 0.02$.
Starting from a so-called equilibrium S-state~\cite{MUMAG}, the experiment consists in applying
the constant applied field $\mu_0 \boldsymbol{H}_{\mathrm{ext}} = (-24.6  , 4.3 , 0 )$ \si{\milli\tesla}
for \SI{3}{\nano\second}.

For the rescaled form~\eqref{eq:llg:ibvp} of LLG, the above physical quantities lead to the parameters
$\lex = \sqrt{2A/(\mu_0 M_\mathrm{s}^2)}$,
$T = \num{3e-9} \gamma_0 M_{\mathrm{s}}$,
and $\f= \boldsymbol{H}_{\mathrm{ext}} / M_{\mathrm{s}}$,
while $\ppi(\m)$ includes only the stray field $\hstray$.
For the space discretization, we consider a tetrahedral partition of the thin film generated by Netgen~\cite{ngsolve} into cells of prescribed mesh size $\SI{3}{\nm}$.
This corresponds to $\num{48796}$ elements and $\num{16683}$ vertices.
For the time discretization, we consider a constant physical time-step size of $\Delta t = \SI{0.1}{\ps}$,
which is connected to the rescaled time-step size $k$ via the relation $k = \gamma_0 M_{\mathrm{s}} \Delta t$.

For comparison, the desired output of this benchmark problem is the evolution of the $x$-, $y$- and $z$-component of the spatially averaged magnetization.
Figure~\ref{fig:mumag4_avg} shows, that our results match those computed by the finite difference code OOMMF~\cite{OOMMF} available on the $\mu$MAG homepage~\cite{MUMAG}.
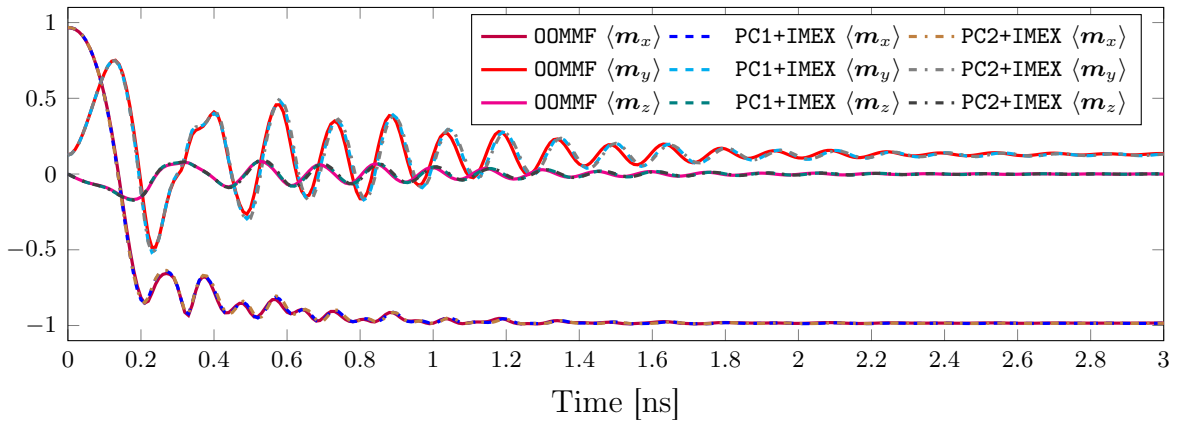
\begin{figure}[h]
\centering
\begin{tikzpicture}
\pgfplotstableread{plots/sp4-avg-oommf.dat}{\oommfData}
\pgfplotstableread{plots/sp4-avg-kw1ee.dat}{\kwFirstEE}
\pgfplotstableread{plots/sp4-avg-kw2ab.dat}{\kwSecondAB}
\begin{axis}[
width = 160mm,
height = 60mm,
xlabel={Time [\si{\nano\second}]},
xmin=0,
xmax=3,
ymin=-1.1,
ymax=1.1,
legend columns=3,
legend style={/tikz/column 3/.style={column sep=5pt}},
]
\addplot[purple, very thick] table[x=t, y=mx]{\oommfData};
\addplot[blue, very thick,dashed] table[x=t, y=mx]{\kwFirstEE};
\addplot[brown,very thick,dash pattern=on 3pt off 3pt on 1pt off 3pt] table[x=t, y=mx]{\kwSecondAB};
\addplot[red, very thick] table[x=t, y=my]{\oommfData};
\addplot[cyan,dashed, very thick] table[x=t, y=my]{\kwFirstEE};
\addplot[gray,very thick,dash pattern=on 3pt off 3pt on 1pt off 3pt] table[x=t, y=my]{\kwSecondAB};
\addplot[magenta, very thick] table[x=t, y=mz]{\oommfData};
\addplot[teal,dashed, very thick] table[x=t, y=mz]{\kwFirstEE};
\addplot[darkgray,very thick,dash pattern=on 3pt off 3pt on 1pt off 3pt] table[x=t, y=mz]{\kwSecondAB};
\legend{
\texttt{OOMMF} $\langle \m_x \rangle$,
\texttt{PC1+IMEX} $\langle \m_x \rangle$,
\texttt{PC2+IMEX} $\langle \m_x \rangle$,
\texttt{OOMMF} $\langle \m_y \rangle$,
\texttt{PC1+IMEX} $\langle \m_y \rangle$,
\texttt{PC2+IMEX} $\langle \m_y \rangle$,
\texttt{OOMMF} $\langle \m_z \rangle$,
\texttt{PC1+IMEX} $\langle \m_z \rangle$,
\texttt{PC2+IMEX} $\langle \m_z \rangle$,
}
\end{axis}
\end{tikzpicture}
\caption{$\mu$MAG standard problem~\#4 from Section~\ref{sec:mumag4}: Time evolution of the spatially averaged magnetization components computed with Algorithm~\ref{alg:pc1:imex} (\texttt{PC1+IMEX}) and Algorithm~\ref{alg:pc2:imex} (\texttt{PC2+IMEX}) compared to the results of {\tt OOMMF}.}
\label{fig:mumag4_avg}
\end{figure}

\subsection{Empirical convergence rates for LLG}
\label{sec:convOrder}%
We aim to illustrate the accuracy and the computational effort of the following four algorithms:
\begin{itemize}
\item {\tt PC1}: fully implicit first-order scheme proposed in~\cite{kw2018}
and recalled in Algorithm~\ref{alg:pc1:variational};
\item {\tt PC1+IMEX}: {\tt PC1} with explicit treatment of the lower-order terms as proposed in this work
and formulated in Algorithm~\ref{alg:pc1:imex};
\item {\tt PC2}: fully implicit second-order scheme proposed in~\cite{kw2018}
and recalled in Algorithm~\ref{alg:pc2:variational};
\item {\tt PC2+IMEX}: {\tt PC2} with explicit treatment of the lower-order terms as proposed in this work
and formulated in Algorithm~\ref{alg:pc2:imex};
\end{itemize}
For all integrators we choose $\theta=1/2$.
To obtain experimental convergence rates in time, we use the model problem proposed in~\cite{prs2017}:
We consider the initial boundary value problem~\eqref{eq:llg:ibvp}
with $\Omega = (0,1)^3$, $\m^0 \equiv (1,0,0)$, $\alpha=1$, and $T=5$.
For the effective field~\eqref{eq:llg:heff}, we choose $\ell_{\mathrm{ex}}=1$, a constant applied
field $\f \equiv (-2,-0.5,0)$, as well as an operator $\ppi$
which consists only of the stray field, i.e., $\ppi(\m) = \hstray(\m)$.

For the predictor step in {\tt PC1} and {\tt PC2}, respectively, we solve~\eqref{eq:pc1:implicit_pi}.
Since $\ppi_h$ effectively depends on $\v_h^\ell$ in~\eqref{eq:pc1:implicit_pi},
the linear system in the predictor step of Algorithm~\ref{alg:pc1:variational} and Algorithm~\ref{alg:pc2:variational} is solved with an inner fixed-point iteration which is stopped as soon as an accuracy of $10^{-10}$ (of $\norm[\L^2(\Omega)]{\v_h^i}$) is reached.
Other arising linear systems are solved with GMRES (or\ with CG for the hybrid FEM-BEM approach) with tolerance $10^{-12}$.
For the spatial discretization we consider a fixed triangulation $\TT_h$ of $\Omega$ generated by Netgen,
which consists of \num{3939} elements and \num{917} nodes (prescribed mesh size $h = 1/8$).

Since the exact solution of the problem is unknown, to compute the empirical convergence rates, we consider a reference solution $\m_{h,k_{\rm ref}}$ computed with the IMEX version of the second-order midpoint scheme from \cite{prs2017} using the above mesh and the time-step size $k_{\rm ref} = 2\cdot10^{-4}$.

Figure~\ref{fig:rates+time}\textrm{(a)} visualizes the experimental order of convergence of the four integrators. As expected, {\tt PC2} and {\tt PC2+IMEX} lead to second-order convergence in time.
Essentially, both integrators even lead quantitatively to the same accuracy of the numerical solution.
{\tt PC1} as well as {\tt PC1+IMEX} yield first-order convergence.
Differently from the classical $\theta$-method for linear second-order parabolic PDEs, due to the tangent plane constraint and the presence of the nodal projection, the {\tt PC1} integrator with $\theta=1/2$ (Crank--Nicolson-type) does not lead to any improvement of the convergence order in time (from first-order to second-order); see \cite{akst2014} for a formal analysis in the case of the tangent plane scheme.

In Figure~\ref{fig:rates+time}\textrm{(b)}, we plot the cumulative computational costs for the integration up to the final time $T$.
The computational effort improves considerably if the lower-order terms (i.e., the stray field) are integrated explicitly in time, since then the costly inner fixed-point iteration to solve~\eqref{eq:pc1:implicit_pi} is omitted.
Due to the more sophisticated corrector step in Algorithm~\ref{alg:pc2:variational} and Algorithm~\ref{alg:pc2:imex}, the second-order schemes \texttt{PC2} and \texttt{PC2+IMEX} are (slightly) more costly than their first-order counterparts \texttt{PC1} and \texttt{PC1+IMEX}, respectively.
\begin{figure}[h]
\centering
\begin{subfigure}{0.48\textwidth}
\begin{tikzpicture}
\pgfplotstableread{plots/H1_rates.dat}{\data}
\begin{loglogaxis}[
xlabel={time-step size $\times\, k_{\rm ref}$},
ylabel={Error},
xtick={0.0128, 0.0064, 0.0032, 0.0016, 0.0008, 0.0004},
xticklabels={64, 32, 16, 8, 4, 2},
height = 60mm,
legend style={
legend pos= south west},
ymax=0.5e-1,
ymin=0.5e-7,
ytick distance=10^1,
x dir=reverse
]
\addplot[only marks, red, mark=o, mark size=3.5, very thick] table[x=k, y=KW1] {\data};
\addplot[only marks, red, mark=x, mark size=3, very thick] table[x=k, y=KW1EE] {\data};
\addplot[only marks, teal, mark=o, mark size=3.5, very thick] table[x=k, y=KW2] {\data};
\addplot[only marks, teal, mark=x, mark size=3, very thick] table[x=k, y=KW2AB] {\data};
\addplot[black, dashed, ultra thick] table[x=k, y expr={(\thisrow{k})}]{\data};
\node at (axis cs:1e-3,1e-3) [anchor=north east] {\tiny $\mathcal{O}(k)$};
\addplot[black, dashed, ultra thick] table[x=k, y expr={(\thisrow{k}*\thisrow{k})*1.8}]{\data};
\node at (axis cs:1.2e-3,1.6e-6) [anchor=east] {\tiny $\mathcal{O}(k^2)$};
\legend{{\tt PC1},{\tt PC1+IMEX},{\tt PC2},{\tt PC2+IMEX}}
\end{loglogaxis}
\end{tikzpicture}
\caption{Error $\max_{\ell=0, \dots, L} \bignorm[\H^1(\Omega)]{\m_{h,k_{\rm ref}}^\ell - \m_{h,k}^\ell}$ for $k = 2^\ell \, k_{\rm ref}$ with $\ell  \in \{1,2,3,4,5,6\}$ and $k_{\rm ref} = 2\cdot10^{-4}$.}
\label{subfig:ratesH1_llg}
\end{subfigure}
\hfill 
\begin{subfigure}{0.48\textwidth}
\begin{tikzpicture}
\pgfplotstableread{plots/cumulative_isolated.dat}{\data}
\begin{axis}[
xlabel={physical time},
ylabel={comp. time [\si{\second}]},
height = 60mm,
legend style={
legend pos= north west},
xmax=5,
xmin=0,
ymin=0,
ymax=1.1e3,
ytick={200, 400, 600, 800, 1000},
]
\addplot[red,ultra thick] table[x=time, y=KW1] {\data};
\addplot[red, dashed, ultra thick] table[x=time, y=KW1EE] {\data};
\addplot[teal,ultra thick] table[x=time, y=KW2] {\data};
\addplot[teal, dashed, ultra thick] table[x=time, y=KW2AB] {\data};
\legend{{\tt PC1},{\tt PC1+IMEX},{\tt PC2},{\tt PC2+IMEX},} %
\end{axis}
\end{tikzpicture}
\caption{Cumulative computational time for $k = 8\cdot10^{-4}$.
Costs improve considerably for the IMEX versions.}
\label{subfig:cumulative}
\end{subfigure}
\caption{Experiments of Section~\ref{sec:convOrder}: Order of convergence (left) and cumulative computational time (right) of the integrators for $\theta = 1/2$.}
\label{fig:rates+time}
\end{figure}
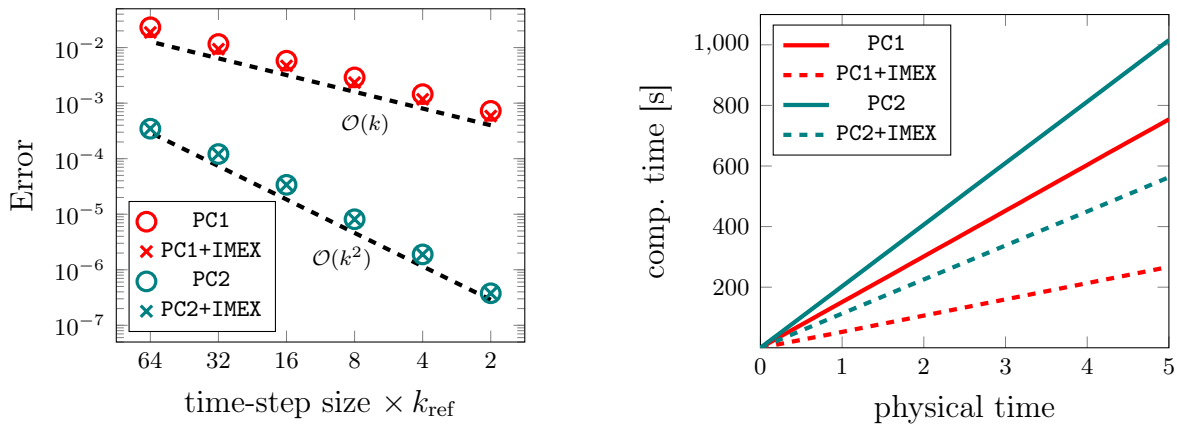

Further, we repeat the experiment for different values of $\theta \in [0,1]$ for both, \texttt{PC1+IMEX} and \texttt{PC2+IMEX}.
The results for \texttt{PC1+IMEX} in Figure~\ref{fig:order:pc2:theta}\textrm{(a)} confirm that the strong CFL condition $k = o(h^2)$, which is imposed to obtain stability and convergence of \texttt{PC1+IMEX} (see Remark~\ref{re:pc1:convergence}{\rm(ii)}) with $\theta < 1/2$, are also crucial in practice.
As expected, the observed order of convergence of \texttt{PC1+IMEX} is unaffected by the choice of $\theta \in [0, 1]$.

The results for \texttt{PC2+IMEX} shown in Figure~\ref{fig:order:pc2:theta}\textrm{(b)} are quite surprising:
While for $\theta \not= 1/2$, the simulation is not stable for larger time-step sizes $k > 0$, still second-order convergence is observed for all $0 \le \theta \le 1$ as the time-step size $k$ decreases below a certain threshold.
The preserved second-order accuracy for $\theta \not= 1/2$ might be a consequence of the degree of implicitness $\theta$ only appearing in the predictor, but not in the corrector of the scheme.
In contrast to stability for \texttt{PC1+IMEX}, the results of this experiment indicate that for stability of \texttt{PC2+IMEX} more restrictive CFL conditions are necessary for $\theta \not= 1/2$ than for $\theta = 1/2$.
This observation is further investigated in Section~\ref{sec:stability_pc2}.
\begin{figure}[h]
\centering
\begin{subfigure}{0.48\textwidth}
\begin{tikzpicture}
\pgfplotstableread{plots/theta_H1_rates.dat}{\data}
\begin{loglogaxis}[
xlabel={time-step size $\times\, k_{\rm ref}$},
ylabel={Error},
xtick={0.0128, 0.0064, 0.0032, 0.0016, 0.0008, 0.0004},
xticklabels={64, 32, 16, 8, 4, 2},
height = 60mm,
legend style={
legend pos= south west},
ymax=0.5e-1,
ymin=0.5e-7,
ytick distance=10^1,
x dir=reverse
]
\addplot[only marks, purple, mark=*, mark size=4.5, very thick] table[x=k, y=KW1EE_t0_4] {\data};
\addplot[only marks, blue, mark=triangle, mark size=5, very thick] table[x=k, y=KW1EE_t1_4] {\data};
\addplot[only marks, teal, mark=o, mark size=3.5, very thick] table[x=k, y=KW1EE_t2_4] {\data};
\addplot[only marks, green, mark=diamond, mark size=3.5, very thick] table[x=k, y=KW1EE_t3_4] {\data};
\addplot[only marks, brown, mark=x, mark size=3.5, very thick] table[x=k, y=KW1EE_t4_4] {\data};
\addplot[black, dashed, ultra thick] table[x=k, y expr={(\thisrow{k})}]{\data};
\node at (axis cs:1e-3,1e-3) [anchor=north east] {\tiny $\mathcal{O}(k)$};
\legend{$\theta=0\phantom{/4}$,$\theta=1/4$,$\theta=1/2$,$\theta=3/4$,$\theta=1\phantom{/4}$}
\end{loglogaxis}
\end{tikzpicture}
\caption{Recomputation of Figure~\ref{fig:rates+time}{\rm(a)} for \texttt{PC1+IMEX} with various $\theta \in [0,1]$.}
\end{subfigure}
\hfill 
\begin{subfigure}{0.48\textwidth}
\begin{tikzpicture}
\pgfplotstableread{plots/theta_H1_rates.dat}{\data}
\begin{loglogaxis}[
xlabel={time-step size $\times\, k_{\rm ref}$},
ylabel={Error},
xtick={0.0128, 0.0064, 0.0032, 0.0016, 0.0008, 0.0004},
xticklabels={64, 32, 16, 8, 4, 2},
height = 60mm,
legend style={
legend pos= north east},
ymax=0.5e-1,
ymin=0.5e-7,
ytick distance=10^1,
x dir=reverse
]
\addplot[only marks, purple, mark=*, mark size=4.5, very thick] table[x=k, y=KW2AB_t0_4] {\data};
\addplot[only marks, blue, mark=triangle, mark size=5, very thick] table[x=k, y=KW2AB_t1_4] {\data};
\addplot[only marks, teal, mark=o, mark size=3.5, very thick] table[x=k, y=KW2AB_t2_4] {\data};
\addplot[only marks, green, mark=diamond, mark size=3.5, very thick] table[x=k, y=KW2AB_t3_4] {\data};
\addplot[only marks, brown, mark=x, mark size=3.5, very thick] table[x=k, y=KW2AB_t4_4] {\data};
\addplot[black, dashed, ultra thick] table[x=k, y expr={(\thisrow{k}*\thisrow{k})*1.8}]{\data};
\node at (axis cs:1.2e-3,1.6e-6) [anchor=east] {\tiny $\mathcal{O}(k^2)$};
\legend{$\theta=0\phantom{/4}$,$\theta=1/4$,$\theta=1/2$,$\theta=3/4$,$\theta=1\phantom{/4}$}
\end{loglogaxis}
\end{tikzpicture}
\caption{Recomputation of Figure~\ref{fig:rates+time}{\rm(a)} for \texttt{PC2+IMEX} with various $\theta \in [0,1]$.}
\end{subfigure}
\caption{Experiments of Section~\ref{sec:convOrder}: 
Order of convergence and stability for \texttt{PC1+IMEX} and \texttt{PC2+IMEX} for different values of $\theta \in [0, 1]$.
Stability is lost for \texttt{PC1+IMEX} (left) with $\theta = 0$ for $k \ge 8 \cdot k_{\operatorname{ref}}$, and with $\theta = 1/4$ for $k \ge 16 \cdot k_{\operatorname{ref}}$;
for \texttt{PC2+IMEX} (right) with $\theta \in \{0, 3/4, 1\}$ for $k \ge 16 \cdot k_{\operatorname{ref}}$, and with $\theta = 1/4$ for $k \ge 32 \cdot k_{\operatorname{ref}}$.}
\label{fig:order:pc2:theta}
\end{figure}

Overall, the proposed {\tt PC2+IMEX} integrator with $\theta = 1/2$ appears to be the method of choice with respect to experimental stability, computational time, and empirical accuracy. 

\subsection{Experimental stability of \texttt{PC2}}
\label{sec:stability_pc2}
We demonstrated the potential of (the IMEX version of) the second-order predictor-corrector scheme~\texttt{PC2} (\texttt{PC2+IMEX}) in Section~\ref{sec:mumag4} and Section~\ref{sec:convOrder}.
Our analysis guarantees unconditional well-posedness of the proposed second-order integrators in theory (Theorem~\ref{thm:pc2:well_posedness}) and in practice (Theorem~\ref{thm:pc2:inexact:well_posedness}).
However, neither the present work nor~\cite{kw2018} include a rigorous analysis on the stability of the second-order predictor-corrector scheme~\texttt{PC2} (Algorithm~\ref{alg:pc2:variational}), or its variant~\texttt{PC2+IMEX} (Algorithm~\ref{alg:pc2:imex}).
More precisely, it remains unclear whether the prescription of a CFL condition $k = o(h^\beta)$ for some $\beta > 0$ is sufficient to prove a discrete energy estimate of the form
\begin{align}
\norm{\Grad\m_h^J}^2 \le \norm{\Grad\m_h^0}^2 \qquad \text{for all } J=0, \dots, L\,,
\end{align}
where we omitted any lower-order contributions; see, e.g., \eqref{eq:pc1:imex:stability} for the full discrete energy estimate for~\texttt{PC1+IMEX}.

Hence, we close this section by a numerical study investigating the stability of~\texttt{PC2}.
Note that \texttt{PC2+IMEX} coincides with \texttt{PC2} for the exchange only case $\heff(\m) = \lex^2\Lapl\m$ of LLG, which is considered in the following experiments.
Motivated by the observations on stability of \texttt{PC2+IMEX} in Figure~\ref{fig:order:pc2:theta}{\rm(b)}, particular focus is put on the dependence on $0 \le \theta \le 1$, which controls the degree of implicitness in the predictor step~\eqref{eq:pc2:variational:predictor}.

\subsubsection{Setup}
\label{sec:numerics:pc2:stability:setup}
We consider the partition $\TT_h$ of the unit cube from Section~\ref{sec:convOrder}.
For a non-uniform initial condition $\m_h^0 \in \Mh$, we consider the exchange only case $\heff(\m) = \lex^2\Lapl\m$ of LLG and relax the dynamics until the (uniform) equilibrium state is reached.
Due to the absence of any lower-order contributions ($\ppi \equiv \0, \f \equiv \0$), the equilibrium state is a uniform magnetization in space, and the simulation is successfully stopped as soon as $\norm{\Grad\m_h^L}^2 \le 10^{-8}$ for some $L > 0$.
If $\norm{\Grad \m_h^{\ell+1}}^2 \le \norm{\Grad \m_h^{\ell}}^2$ for all $\ell=0, \dots, L-1$, the simulation is considered to be stable for the triangulation $\TT_h$ with fixed time-step size $k > 0$ and initial condition $\m_h^0 \in \Mh$.
If for some $\ell \ge 0$ the energy increases, i.e., if there holds $\norm{\Grad \m_h^{\ell+1}}^2 > \norm{\Grad \m_h^{\ell}}^2$, then we abort the simulation and we consider the simulation to be unstable for this combination of $\TT_h$,  $k > 0$, and $\m_h^0 \in \Mh$.

\subsubsection{Random initial state}
\label{sec:numerics:pc2:stability:random}
We choose the initial state $\m_h^0 \in \Mh$ such that $\{\m_{\z}(\z)\}_{\z \in \NN_h}$ is distributed randomly on $\sphere$.
\begin{figure}[h]
\centering
\begin{subfigure}{0.69\textwidth}
\begin{tikzpicture}
\pgfplotstableread{plots/cube_real_random_alpha1by1_stable.dat}{\stable}
\pgfplotstableread{plots/cube_real_random_alpha1by1_unstable.dat}{\unstable}
\begin{axis}[
xlabel={$\theta$},
x label style={at={(axis description cs:0.5,-0.15)},anchor=south},
ylabel={time-step size $k$ [$\cdot 10^{-3}$]},
y label style={at={(axis description cs:-0.0875,0)},anchor=west},
xtick={0, 0.1, 0.2, 0.3, 0.4, 0.5, 0.6, 0.7, 0.8, 0.9, 1},
xticklabels={0,,0.2,,0.4,,0.6,,0.8,,1},
ytick={1, 2, 3, 4, 5, 6, 7, 8, 9, 10, 11, 12},
yticklabels={1,,3,,5,,7,,9,,11,},
height = 60mm,
width = \textwidth,
legend style={
legend pos= north east},
ymin=0,
ymax=12.99,
xmin=-0.05,
xmax=1.05,
]
\addplot[only marks, green!50!black, mark=*, mark size=1, thick] table[x=theta, y=k] {\stable};
\addplot[only marks, red, mark=x, mark size=1.5, thick] table[x=theta, y=k] {\unstable};
\addplot [mark=none, very thick, dashed, gray] coordinates {(0.5, 0.5) (0.5, 12.5)};
\legend{stable, unstable, $\theta=1/2$}
\end{axis}
\end{tikzpicture}
\end{subfigure}
\hfill 
\begin{subfigure}{0.29\textwidth}
\begin{tikzpicture}
\node[] at (0, 0) {
\includegraphics[scale=0.08]{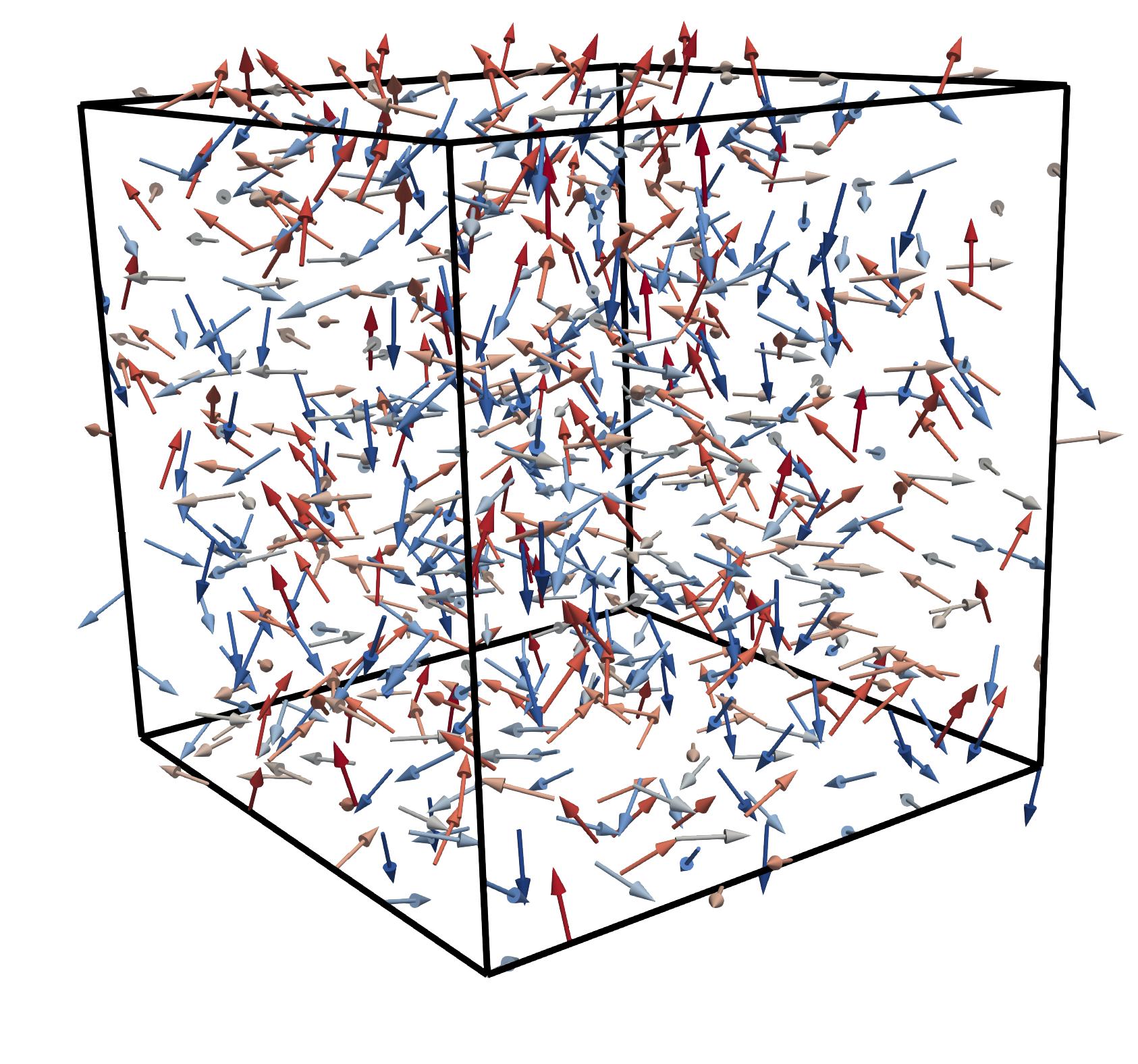}
};
\node[] at (0, -2.5) {$\m_h^0$};
\end{tikzpicture}
\end{subfigure}
\caption{Experiment of Section~\ref{sec:numerics:pc2:stability:random}.
Right: Random state $\m_h^0$ colored by the $z$-component; red pointing upwards, blue downwards.
Left: For all $\theta = 0/80, 1/80, \dots, 80/80$ and all $k = 1 \cdot 10^{-3}, 2 \cdot 10^{-3}, \dots, 12 \cdot 10^{-3}$, the stability of \texttt{PC2} is investigated.
}
\label{fig:cube_rand_stability}
\end{figure}

Figure~\ref{fig:cube_rand_stability} shows, that for any fixed $0 \le \theta \le 1$ the simulation is stable if the time-step size $k > 0$ is chosen small enough.
Clearly, stability of the simulation does not only depend on the chosen time-step size $k > 0$, but also on the parameter $\theta$:
Values of $\theta$ close to $1/2$ (best at $0.4375$ in this experiment) appear to be far less restrictive for the time-step size $k > 0$ than values farther from $1/2$.
We note that we repeated this experiment for various random initial states, all producing essentially the same result (not displayed).

\subsubsection{Hedgehog state}
\label{sec:numerics:pc2:stability:hedgehog}
We repeat the experiment from Section~\ref{sec:numerics:pc2:stability:random} for $\m_h^0$ being the so-called hedgehog state, i.e., considering the cube to be centered around the origin, for each vertex $\z \in \NN_h$ we set the initial value $\m_h^0(\z) := \z / |\z| \in \sphere$.
\begin{figure}[h]
\centering
\begin{subfigure}{0.69\textwidth}
\begin{tikzpicture}
\pgfplotstableread{plots/cube_hedgehog_stable.dat}{\stable}
\pgfplotstableread{plots/cube_hedgehog_unstable.dat}{\unstable}
\begin{axis}[
xlabel={$\theta$},
x label style={at={(axis description cs:0.5,-0.15)},anchor=south},
ylabel={time-step size $k$ [$\cdot 10^{-3}$]},
y label style={at={(axis description cs:-0.0875,0)},anchor=west},
xtick={0, 0.1, 0.2, 0.3, 0.4, 0.5, 0.6, 0.7, 0.8, 0.9, 1},
xticklabels={0,,0.2,,0.4,,0.6,,0.8,,1},
ytick={1, 7, 13, 19, 25},
height = 60mm,
width = \textwidth,
legend style={
legend pos= north east},
ymin=-1,
ymax=26.99,
xmin=-0.05,
xmax=1.05,
]
\addplot[only marks, green!50!black, mark=*, mark size=1, thick] table[x=theta, y=k] {\stable};
\addplot[only marks, red, mark=x, mark size=1.5, thick] table[x=theta, y=k] {\unstable};
\addplot [mark=none, very thick, dashed, gray] coordinates {(0.5, -0.0) (0.5, 26)};
\legend{stable, unstable, $\theta=1/2$}
\end{axis}
\end{tikzpicture}
\end{subfigure}
\hfill 
\begin{subfigure}{0.29\textwidth}
\begin{tikzpicture}
\node[] at (0, 0) {
\includegraphics[scale=0.08]{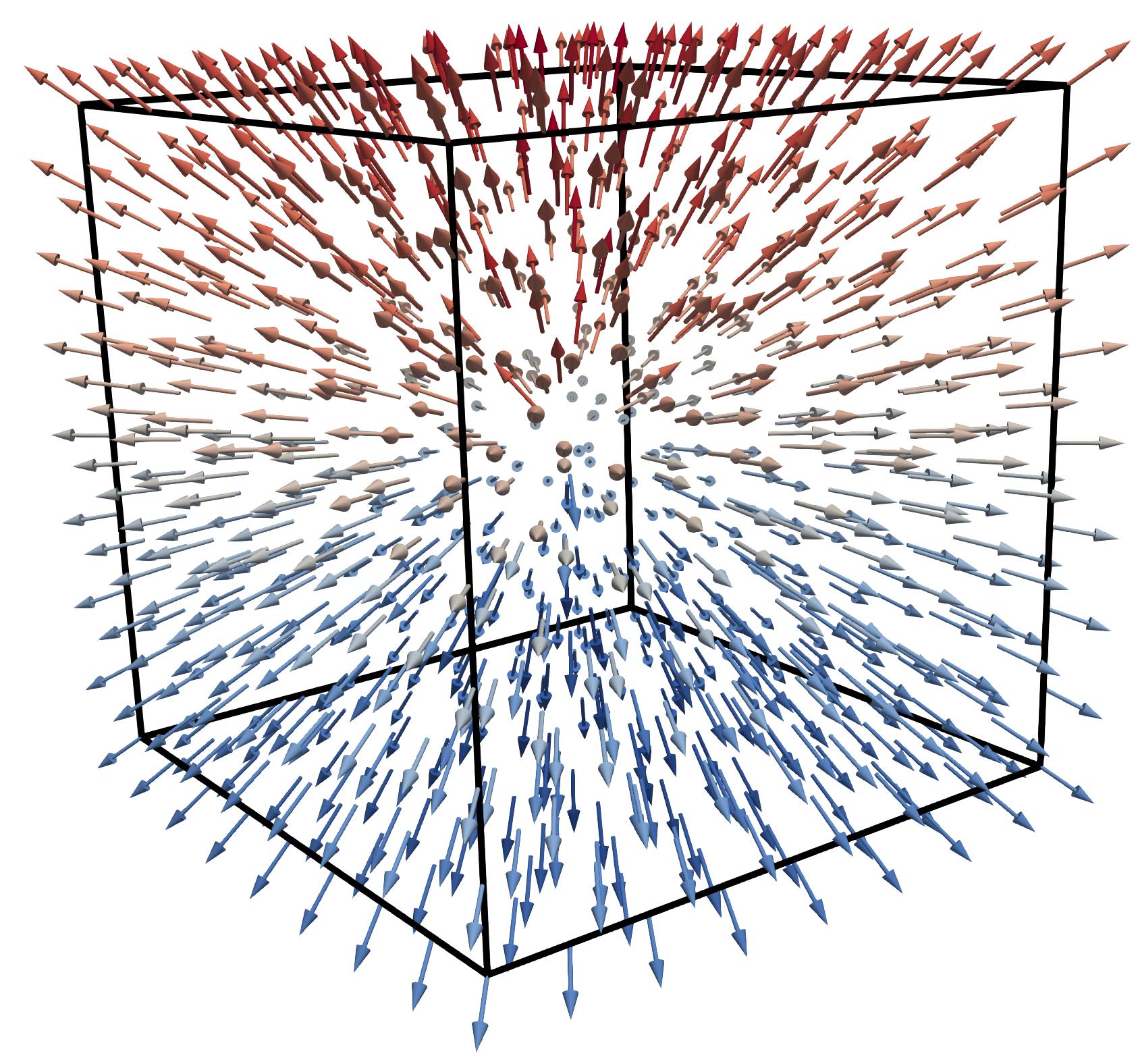}
};
\node[] at (0, -2.5) {$\m_h^0$};
\end{tikzpicture}
\end{subfigure}
\caption{Experiment of Section~\ref{sec:numerics:pc2:stability:hedgehog}:
Right: Hedgehog state $\m_h^0$ colored by the $z$-component; red pointing upwards, blue downwards.
Left: For all $\theta = 0/80, 1/80, \dots, 80/80$ and all $k = 1 \cdot 10^{-3}, 2 \cdot 10^{-3}, \dots, 25 \cdot 10^{-3}$, the stability of \texttt{PC2} is investigated.
}
\label{fig:cube_hedgehog_stability}
\end{figure}

Figure~\ref{fig:cube_hedgehog_stability} shows, that again for any $0 \le \theta \le 1$ the simulation is stable if the time-step size $k > 0$ is chosen small enough.
As in Section~\ref{sec:numerics:pc2:stability:random}, values of $\theta$ close to $1/2$ appear to be far less restrictive for the time-step size $k > 0$ than values farther from $1/2$, with the optimal choice this time closer to $1/2$, precisely at $\theta = 0.475$.
Interestingly, for the parameter $\theta \in [0,1]$ chosen far from $1/2$, the results quantitatively match with those for the random initial state from Section~\ref{sec:numerics:pc2:stability:random}.
Closer to $1/2$, however, much larger time-step sizes $k > 0$ allow for stable simulations as for the random initial state.

\subsubsection{Variation of the Gilbert damping parameter}
\label{sec:numerics:pc2:stability:random:alpha}
We repeat the experiment from Section~\ref{sec:numerics:pc2:stability:random} for different values of $\alpha = 1/2, 1/4, 1/8, 1/16$.
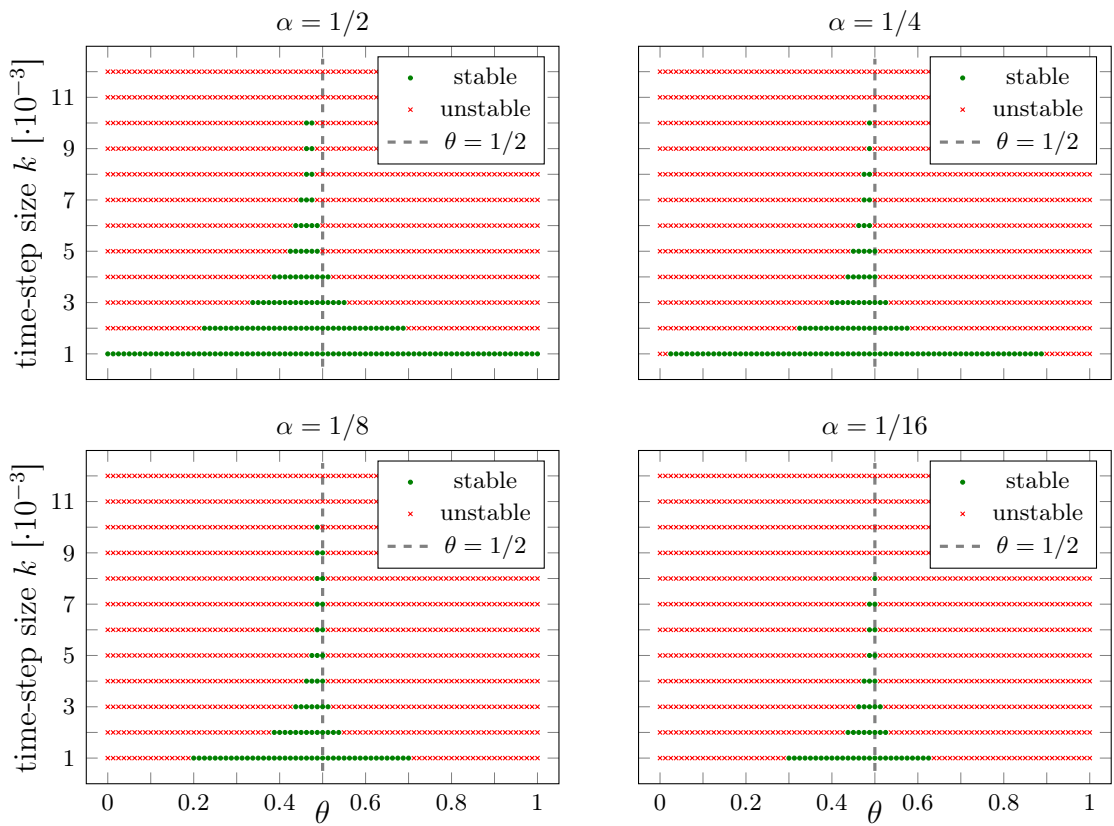
\begin{figure}[h]
\centering
\begin{subfigure}{0.49\textwidth}
\begin{tikzpicture}
\pgfplotstableread{plots/cube_real_random_alpha1by2_stable.dat}{\stable}
\pgfplotstableread{plots/cube_real_random_alpha1by2_unstable.dat}{\unstable}
\begin{axis}[
title={\footnotesize$\alpha=1/2$},
title style={at={(axis description cs:0.5,0.95)},anchor=south},
ylabel={time-step size $k$ [$\cdot 10^{-3}$]},
y label style={at={(axis description cs:-0.125,0)},anchor=west},
xtick={0, 0.1, 0.2, 0.3, 0.4, 0.5, 0.6, 0.7, 0.8, 0.9, 1},
xticklabels={,,,,,,,,,,},
ytick={1, 2, 3, 4, 5, 6, 7, 8, 9, 10, 11, 12},
yticklabels={1,,3,,5,,7,,9,,11,},
height = 60mm,
width = \textwidth,
legend style={
legend pos= north east},
ymin=0,
ymax=12.99,
xmin=-0.05,
xmax=1.05,
]
\addplot[only marks, green!50!black, mark=*, mark size=0.75] table[x=theta, y=k] {\stable};
\addplot[only marks, red, mark=x, mark size=1.125] table[x=theta, y=k] {\unstable};
\addplot [mark=none, very thick, dashed, gray] coordinates {(0.5, 0.5) (0.5, 12.5)};
\legend{stable, unstable, $\theta=1/2$}
\end{axis}
\end{tikzpicture}
\end{subfigure}
\hfill 
\begin{subfigure}{0.49\textwidth}
\begin{tikzpicture}
\pgfplotstableread{plots/cube_real_random_alpha1by4_stable.dat}{\stable}
\pgfplotstableread{plots/cube_real_random_alpha1by4_unstable.dat}{\unstable}
\begin{axis}[
title={\footnotesize$\alpha=1/4$},
title style={at={(axis description cs:0.5,0.95)},anchor=south},
xtick={0, 0.1, 0.2, 0.3, 0.4, 0.5, 0.6, 0.7, 0.8, 0.9, 1},
xticklabels={,,,,,,,,,,},
ytick={1, 2, 3, 4, 5, 6, 7, 8, 9, 10, 11, 12},
yticklabels={,,,,,,,,,,,,,,,,,},
height = 60mm,
width = \textwidth,
legend style={
legend pos= north east},
ymin=0,
ymax=12.99,
xmin=-0.05,
xmax=1.05,
]
\addplot[only marks, green!50!black, mark=*, mark size=0.75] table[x=theta, y=k] {\stable};
\addplot[only marks, red, mark=x, mark size=1.125] table[x=theta, y=k] {\unstable};
\addplot [mark=none, very thick, dashed, gray] coordinates {(0.5, 0.5) (0.5, 12.5)};
\legend{stable, unstable, $\theta=1/2$}
\end{axis}
\end{tikzpicture}
\end{subfigure}
\begin{subfigure}{0.49\textwidth}
\begin{tikzpicture}
\pgfplotstableread{plots/cube_real_random_alpha1by8_stable.dat}{\stable}
\pgfplotstableread{plots/cube_real_random_alpha1by8_unstable.dat}{\unstable}
\begin{axis}[
title={\footnotesize$\alpha=1/8$},
title style={at={(axis description cs:0.5,0.95)},anchor=south},
xlabel={$\theta$},
x label style={at={(axis description cs:0.5,-0.15)},anchor=south},
ylabel={time-step size $k$ [$\cdot 10^{-3}$]},
y label style={at={(axis description cs:-0.125,0.0)},anchor=west},
xtick={0, 0.1, 0.2, 0.3, 0.4, 0.5, 0.6, 0.7, 0.8, 0.9, 1},
xticklabels={0,,0.2,,0.4,,0.6,,0.8,,1},
ytick={1, 2, 3, 4, 5, 6, 7, 8, 9, 10, 11, 12},
yticklabels={1,,3,,5,,7,,9,,11,},
height = 60mm,
width = \textwidth,
legend style={
legend pos= north east},
ymin=0,
ymax=12.99,
xmin=-0.05,
xmax=1.05,
]
\addplot[only marks, green!50!black, mark=*, mark size=0.75] table[x=theta, y=k] {\stable};
\addplot[only marks, red, mark=x, mark size=1.125] table[x=theta, y=k] {\unstable};
\addplot [mark=none, very thick, dashed, gray] coordinates {(0.5, 0.5) (0.5, 12.5)};
\legend{stable, unstable, $\theta=1/2$}
\end{axis}
\end{tikzpicture}
\end{subfigure}
\hfill 
\begin{subfigure}{0.49\textwidth}
\begin{tikzpicture}
\pgfplotstableread{plots/cube_real_random_alpha1by16_stable.dat}{\stable}
\pgfplotstableread{plots/cube_real_random_alpha1by16_unstable.dat}{\unstable}
\begin{axis}[
title={\footnotesize$\alpha=1/16$},
title style={at={(axis description cs:0.5,0.95)},anchor=south},
xlabel={$\theta$},
x label style={at={(axis description cs:0.5,-0.15)},anchor=south},
xtick={0, 0.1, 0.2, 0.3, 0.4, 0.5, 0.6, 0.7, 0.8, 0.9, 1},
xticklabels={0,,0.2,,0.4,,0.6,,0.8,,1},
ytick={1, 2, 3, 4, 5, 6, 7, 8, 9, 10, 11, 12},
yticklabels={,,,,,,,,,,,,,,,,,},
height = 60mm,
width = \textwidth,
legend style={
legend pos= north east},
ymin=0,
ymax=12.99,
xmin=-0.05,
xmax=1.05,
]
\addplot[only marks, green!50!black, mark=*, mark size=0.75] table[x=theta, y=k] {\stable};
\addplot[only marks, red, mark=x, mark size=1.125] table[x=theta, y=k] {\unstable};
\addplot [mark=none, very thick, dashed, gray] coordinates {(0.5, 0.5) (0.5, 12.5)};
\legend{stable, unstable, $\theta=1/2$}
\end{axis}
\end{tikzpicture}
\end{subfigure}
\caption{Experiment of Section~\ref{sec:numerics:pc2:stability:random:alpha}:
With $\m_h^0$ the random state from Figure~\ref{fig:cube_rand_stability}(right) and different damping parameters $\alpha = 1/2, 1/4, 1/8, 1/16$, for all $\theta = 0/80, 1/80, \dots, 80/80$ and all $k = 1 \cdot 10^{-3}, 2 \cdot 10^{-3}, \dots, 12 \cdot 10^{-3}$, the stability of \texttt{PC2(+IMEX)} is investigated.
}
\label{fig:cube_rand_stability:alpha}
\end{figure}

Figure~\ref{fig:cube_rand_stability:alpha} shows that, if the damping parameter $\alpha$ decreases, smaller time-step sizes $k > 0$ are necessary to obtain stable simulations with \texttt{PC2(+IMEX)}.
This observation is in agreement with the role played by $\alpha$ in the model, i.e., incorporating dissipation.
Again, as previously observed for $\alpha = 1$, values of $\theta$ close to $1/2$ allow for larger time-step sizes $k > 0$ than values farther from $1/2$;
with the least restrictive choices at $\theta = 0.4375$ for $\alpha = 1$, $\theta = 0.4625$ to $0.475$ for $\alpha = 1/2$, $\theta = 0.4875$ for $\alpha = 1/4$, $\theta = 0.4875$ for $\theta = 1/8$, and $\theta = 0.5$ for $\alpha = 1/16$.
We obtain analogous results when varying $\alpha$ for the initial hedgehog state (not displayed).

\subsubsection{Concluding remarks on the stability of the second-order scheme}
All experiments in this section show that, in contrast to \texttt{PC1} (Theorem~\ref{thm:pc1:imex:stability}), larger values of $\theta$ do not improve stability of the second-order scheme \texttt{PC2}.
On the contrary, it is even the case that large values of $\theta$ perform as bad as small values of $\theta$.
For a generic simulation with \texttt{PC2(+IMEX)}, we suggest to pick the degree of implicitness $\theta = 1/2$ in the predictor.
Although, when considering one particular simulation setup, there might be better choices allowing for even larger time-step sizes, the choice $\theta = 1/2$ performed reliably throughout all experiments.
In particular, the results from Section~\ref{sec:numerics:pc2:stability:random:alpha} indicate that the deterioration of the optimal $\theta$ (with respect to stability) from $1/2$ might occur specifically for large values of $\alpha$, and quickly vanish as the damping parameter $\alpha$ decreases.
Moreover in future works, proving stability of \texttt{PC2} under some CFL condition for the special case $\theta = 1/2$ might be a possible first step in theoretically understanding stability of \texttt{PC2}.
This seems reasonable, as in this special case only the same highest-order term $\Lapl_h(\m_h^\ell + (k/2)\v_h^\ell)$ appears in the predictor and the corrector of \texttt{PC2}.
Hence these terms partially cancel out, when subtracting the two equations~\eqref{eq:pc2:variational:predictor}--\eqref{eq:pc2:variational:corrector} from each other.

\bibliographystyle{alpha}
\bibliography{ref}
\end{document}